\documentclass[12pt]{article}

\usepackage[margin=1.3in, top=1.3in, bottom=1.3in]{geometry}

\usepackage[utf8]{inputenc} 
\usepackage[T1]{fontenc}    
\usepackage{url}            
\usepackage{booktabs}       
\usepackage{amsfonts}       
\usepackage{nicefrac}       
\usepackage{microtype}      

\usepackage{amsmath,amssymb,amsthm,xcolor}
\usepackage{array}
\usepackage{float}

\theoremstyle{definition}

\theoremstyle{plain}
\newtheorem{theorem}{Theorem}
\newtheorem{lemma}{Lemma}
\newtheorem{corollary}{Corollary}

\usepackage{graphicx}
\usepackage{subcaption}
\usepackage{mdframed}
\usepackage{lipsum}
\usepackage{wrapfig}
\usepackage[hidelinks,hypertexnames=false]{hyperref}
\usepackage{enumitem}
\allowdisplaybreaks
\usepackage{epstopdf}
\usepackage{algorithmic}
\usepackage{booktabs}
\usepackage{changepage}
\usepackage{amssymb}

\usepackage{cleveref}
\Crefname{assumption}{assumption}{assumptions}

\crefalias{enumi}{proposition}

\crefname{equation}{}{}
\hypersetup{colorlinks=true,citecolor=blue, linkcolor=blue}


\title{\LARGE Singular perturbation in heavy ball dynamics}

\begin{document}

\author{\large C\'edric Josz\thanks{\url{cj2638@columbia.edu}, IEOR, Columbia University, New York. Research supported by NSF EPCN grant 2023032 and ONR grant N00014-21-1-2282.} \and Xiaopeng Li\thanks{\url{xl3040@columbia.edu}, IEOR, Columbia University, New York.}}
\date{}

\maketitle

\begin{center}
    \textbf{Abstract}
    \end{center}
    \vspace*{-3mm}
 \begin{adjustwidth}{0.2in}{0.2in}
Given a $C^{1,1}_\mathrm{loc}$ lower bounded function $f:\mathbb{R}^n\rightarrow \mathbb{R}$ definable in an o-minimal structure on the real field, we show that the singular perturbation $\epsilon \searrow 0$ in the heavy ball system 
\begin{equation}
\label{eq:P_eps} \tag{$P_\epsilon$}
    \begin{cases}
        \epsilon\ddot{x}_\epsilon(t) + \gamma\dot{x}_\epsilon(t) + \nabla f(x_\epsilon(t)) = 0, ~~~ \forall t \geqslant 0, \\
        x_\epsilon(0) = x_0, ~~~ \dot{x}_\epsilon(0) = \dot{x}_0,
    \end{cases}
\end{equation}
preserves boundedness of solutions, where $\gamma>0$ is the friction and $(x_0,\dot{x}_0) \in \mathbb{R}^n \times \mathbb{R}^n$ is the initial condition. This complements the work of Attouch, Goudou, and Redont which deals with finite time horizons. In other words, this work studies the asymptotic behavior of a ball rolling on a surface subject to gravitation and friction, without assuming convexity nor coercivity.
\end{adjustwidth} 
\vspace*{3mm}
\noindent{\bf Keywords:} gradient systems, o-minimal structures, perturbation theory.

\section{Introduction}
\label{sec:intro}

Let $\|\cdot\|$ be the induced norm of an inner product $\langle \cdot, \cdot\rangle$ on $\mathbb{R}^n$ and let $\nabla f$ denote the gradient of a $C^1$ function $f:\mathbb{R}^n\rightarrow \mathbb{R}$ with respect to $\langle \cdot, \cdot\rangle$. If $\nabla f$ is locally Lipschitz continuous, then we say that $f$ is $C^{1,1}_\mathrm{loc}$. 

Attouch \textit{et al.} \cite[Theorem 5.1]{attouch2000heavy} (see also \cite[Theorem 2.1]{vasil1995boundary}) show that $\epsilon \searrow 0$ in \eqref{eq:P_eps} is a regular perturbation for finite time horizons when $f$ is a $C^{1,1}_\mathrm{loc}$ lower bounded function. In other words, the global solution to \eqref{eq:P_eps} converges uniformly over bounded subsets of $[0,\infty)$ as $\epsilon \searrow 0$ to the global solution of 
\begin{equation}
\label{eq:P_0} \tag{\ensuremath{P_0}}
    \gamma \dot{x}(t) + \nabla f(x(t)) = 0, ~~~ \forall t\geqslant 0, ~~~ x(0) = x_0,
\end{equation} 
where the existence and uniqueness of global solutions follow from \cite[Theorem 3.1 (i)]{attouch2000heavy} and \cite[Proposition 2.3]{santambrogio2017euclidean}. If $x_0$ is near a strict local minimum of $f$ and $\dot{x}_0$ is sufficiently small, then $\epsilon \searrow 0$ becomes a regular perturbation for the infinite time horizon by Hoppensteadt \cite[Theorem]{hoppensteadt1966singular}. This means that uniform convergence holds over the entire set $[0,\infty)$. In particular, the perturbation preserves boundedness of solutions. Otherwise, $\epsilon \searrow 0$ is a singular perturbation for the infinite time horizon \cite[Remark p. 26]{attouch2000heavy} \cite{kokotovic1999singular}, namely, uniform convergence does not hold over $[0,\infty)$. In this manuscript, we nonetheless show the following property.

\begin{theorem}
\label{thm:bounded}
    Let $\gamma>0$ and $f:\mathbb{R}^n\rightarrow\mathbb{R}$ be a $C^{1,1}_\mathrm{loc}$ lower bounded function definable in an o-minimal structure on the real field. For all $(x_0,\dot{x}_0) \in \mathbb{R}^n \times \mathbb{R}^n$, the global solution to \eqref{eq:P_eps} is uniformly bounded for all sufficiently small $\epsilon>0$ if and only if for all $x_0 \in \mathbb{R}^n$, the global solution to \eqref{eq:P_0} is bounded. 
\end{theorem}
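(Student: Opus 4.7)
The theorem is an equivalence, and I would treat the two directions separately since they use quite different tools. The necessary direction is immediate from the finite-horizon regular perturbation result of \cite[Theorem 5.1]{attouch2000heavy}. Given any $x_0 \in \mathbb{R}^n$, apply the hypothesis with initial condition $(x_0, 0)$ to obtain $M > 0$ with $\|x_\epsilon(t)\| \leq M$ for all $t \geq 0$ and all sufficiently small $\epsilon$. Since $x_\epsilon \to x$ uniformly on each compact $[0,T]$, the $(P_0)$-solution satisfies $\|x(t)\| \leq M$ on $[0,T]$, and letting $T \to \infty$ yields boundedness on $[0,\infty)$.

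The sufficient direction is the substantive part. My starting point is the energy $E_\epsilon(t) := \tfrac{\epsilon}{2}\|\dot x_\epsilon(t)\|^2 + f(x_\epsilon(t))$, which is non-increasing because $\dot E_\epsilon = -\gamma\|\dot x_\epsilon\|^2$. This gives the uniform-in-time estimate $f(x_\epsilon(t)) \leq f(x_0) + \tfrac{\epsilon}{2}\|\dot x_0\|^2$ and a uniform $L^2$ bound on $\dot x_\epsilon$, but in the absence of coercivity of $f$ these do not yield position boundedness on their own. I would argue by contradiction, assuming there exist sequences $\epsilon_k \searrow 0$ and $t_k \geq 0$ with $\|x_{\epsilon_k}(t_k)\| \to \infty$; Attouch's theorem applied on each fixed $[0,T]$ forces $t_k \to \infty$.

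To produce the contradiction, for each large $k$ I would pick a time $s_k < t_k$ and compare $x_{\epsilon_k}$ on $[s_k, t_k]$ with the gradient flow $y_k$ solving \eqref{eq:P_0} from $y_k(s_k) = x_{\epsilon_k}(s_k)$. By hypothesis $y_k$ is bounded; under definability, the Kurdyka--\L{}ojasiewicz inequality ensures $y_k$ has finite length and converges to a critical point $a_k$ of $f$. Rewriting the heavy ball equation as $\gamma \dot x_\epsilon + \nabla f(x_\epsilon) = -\epsilon \ddot x_\epsilon$ exhibits it as a perturbed gradient flow whose forcing is controlled (in an integral sense) by the dissipated energy. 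A KL desingularizing function associated with $f$ near $a_k$ would then serve as a Lyapunov-type quantity measuring the drift of $x_{\epsilon_k}$ from $y_k$, and for $\epsilon_k$ small one hopes to absorb the perturbation into the KL descent, forcing $x_{\epsilon_k}(t_k)$ to remain in a bounded neighborhood of the trajectory of $y_k$ and contradicting the blow-up.

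The main obstacle is precisely this absorption step: a naive Gronwall comparison is useless on infinite horizons, since the $O(\epsilon_k)$ perturbation would integrate to an exponentially growing error. Only the KL rigidity supplied by the o-minimal hypothesis can discipline the drift. Choosing $s_k$ so that $x_{\epsilon_k}(s_k)$ lies in a KL-friendly neighborhood of $f$, and handling the transitions across the (finitely many, by tameness) critical values of $f$ traversed by the approximate trajectory, is where I expect the technical heart of the argument to lie, and probably where definability in an o-minimal structure---rather than merely KL at isolated critical points---is used in earnest.
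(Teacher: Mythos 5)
Your necessary direction is fine and matches the paper's: finite-horizon convergence of $x_\epsilon$ to the gradient flow plus the uniform bound on $x_\epsilon$ passes to the limit on each $[0,T]$. The sufficient direction, however, is an outline rather than a proof, and the step you yourself flag as the ``technical heart'' --- absorbing the $O(\epsilon)$ perturbation into a KL descent over an infinite horizon --- is exactly what is missing, and the route you sketch for it has concrete problems. First, treating $\gamma\dot x_\epsilon+\nabla f(x_\epsilon)=-\epsilon\ddot x_\epsilon$ as a perturbed gradient flow requires an a priori bound on $\ddot x_\epsilon$ (even in an integral sense); the energy identity only gives an $L^2$ bound on $\dot x_\epsilon$, and bounding $\epsilon\ddot x_\epsilon$ from the equation needs $\|\nabla f(x_\epsilon)\|$ bounded, i.e.\ confinement of $x_\epsilon$ --- which is what you are trying to prove. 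The paper breaks this circularity by proving a conditional velocity bound (Lemma \ref{lemma:speed}: if $x_\epsilon$ stays in a bounded set, then $\|\dot x_\epsilon\|\leqslant r$ uniformly in $\epsilon$) and using it only inside trajectories already known to be confined up to an exit time. Second, the KL inequality for $f$ alone does not control heavy ball trajectories; the paper's Lemma \ref{lemma:length} builds a Lyapunov function $H_\alpha(x_\epsilon+\beta\dot x_\epsilon,x_\epsilon)=f(x_\epsilon+\beta\dot x_\epsilon)+\alpha\beta^2\|\dot x_\epsilon\|^2$ with $\alpha,\beta$ tuned as functions of $\epsilon$ so that the resulting length formula $\int_0^T\|\dot x_\epsilon\|\,dt\leqslant\varphi\bigl(f(x_\epsilon(0))-f(x_\epsilon(T))+\eta\epsilon\bigr)$ holds with $\varphi,\eta$ independent of $\epsilon$; producing such a uniform desingularization is the substantive work your sketch leaves to hope.

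Third, the global architecture differs in a way that matters: the paper never compares $x_\epsilon$ with a gradient trajectory on an infinite horizon (precisely the comparison you admit cannot be done by Gronwall). Instead it uses finite-time tracking, uniform over a compact set of initial conditions (Lemma \ref{lemma:tracking}), only to steer $x_\epsilon$ into a $\delta$-neighborhood of the compact critical set $C$ of $f$ in the closure of the gradient-flow hull $\Phi_0$ of $X_0$; then a dichotomy: either $x_\epsilon$ never leaves $\mathring B(C,\xi)$, in which case the length formula bounds its total length, or it exits at some $T^*$, in which case the length formula forces $f(x_\epsilon(T^*))$ to drop a fixed amount below $\max_C f$, so the trajectory restarts from a new compact set $X_1$ whose associated maximal critical value is strictly smaller; the definable Morse--Sard theorem (finitely many critical values) terminates this induction and yields a uniform bound on total length, hence on the trajectories. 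Your proposal correctly identifies the ingredients (KL, tameness, finitely many critical values, finite-horizon regular perturbation) but does not supply the uniform-in-$\epsilon$ length estimate, the conditional velocity bound, or the exit-time induction that replace the infeasible infinite-horizon comparison, so as it stands it has a genuine gap.
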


In the rest of the manuscript, we fix an arbitrary o-minimal structure on the real field (for e.g., the real field with constants \cite{tarski1951decision,seidenberg1954new}, with restricted analytic functions \cite{gabrielov1996complements}, or with the exponential function \cite{wilkie1996model}) and say that $f$ is definable if it is definable in that structure. This is a common framework for studying gradient systems \cite{bolte2010characterizations,kurdyka43quasi} as it enables ones to harness the Kurdyka-\L{}ojasiewicz inequality \cite{kurdyka1998gradients}\cite[Proposition 1 p. 67]{law1965ensembles}. In particular, for heavy ball with friction, this inequality is used to prove that bounded trajectories have finite length \cite[Theorem 4]{begout2015damped} provided that $f$ is $C^2$. This result also holds with variable friction $\gamma(t) = c_1+c_2/t$, $c_1>0$, $c_2\geqslant 0$ if $\nabla f$ is globally Lipschitz continuous. Theorem \ref{thm:bounded} can thus serve as a criterion to establish convergence in the small mass $\epsilon$ regime when $f$ is not convex. Note that when $f$ is $C^1$ convex and admits a minimum, heavy ball trajectories with friction are automatically bounded \cite{alvarez2000minimizing}.

The manuscript is organized as follows. Section \ref{sec:Example} contains an example to illustrate Theorem \ref{thm:bounded}. Section \ref{sec:Preliminary lemmas} contains three preliminary lemmas. Section \ref{sec:Proof of Theorem thm:bounded} contains the proof of Theorem \ref{thm:bounded}. Section \ref{sec:application} gives an application of Theorem \ref{thm:bounded}.

\section{Example}
\label{sec:Example}

Consider the $C^{1,1}_\mathrm{loc}$ lower bounded semi-algebraic function $f:\mathbb{R}^2 \rightarrow \mathbb{R}$ defined by $f(x,y) = (xy-1)^2$. Even though $f$ is not coercive, the degenerate system 
\begin{equation*}
    \left\{
    \begin{array}{l}
    \gamma \dot{x} + 2 y (xy-1) = 0, \\
    \gamma \dot{y} + 2 x (xy-1) = 0,
    \end{array}
    \right.
     ~~~ x(0) = x_0, ~~~ y(0)=y_0,
\end{equation*}
has bounded solutions. Indeed, $x^2-y^2$ is constant and $(xy-1)^2$ is decreasing, hence $x^4+y^4 = (x^2-y^2)^2+2x^2y^2$ is bounded. By Theorem \ref{thm:bounded}, the perturbed system
\begin{equation*}
    \left\{
    \begin{array}{l}
    \epsilon \ddot{x}_\epsilon + \gamma \dot{x}_\epsilon + 2 y_\epsilon (x_\epsilon y_\epsilon-1) = 0, ~~~x_\epsilon(0) = x_0, ~~~ y_\epsilon(0)=y_0,\\ 
    \epsilon \ddot{y}_\epsilon +\gamma \dot{y}_\epsilon + 2 x_\epsilon (x_\epsilon y_\epsilon-1) = 0, ~~~ \dot{x}_\epsilon(0) = \dot{x}_0, ~~~ \dot{y}_\epsilon(0)=\dot{y}_0, \\
    \end{array}
    \right.
\end{equation*}
has uniformly bounded solutions for all sufficiently small $\epsilon>0$, which must converge to a critical point of $f$ by \cite[Theorem 4.1]{begout2015damped}. 

We next show that the limiting critical point need not agree with the limiting critical point of the degenerate system, establishing that $\epsilon \searrow 0$ is a singular perturbation. In order to do so, let $(x_0,y_0):=(a,-a)$ and $(\dot{x}_0,\dot{y}_0):=(b,b)$ where $a,b>0$. The trajectory of the degenerate system converges to the origin, while the trajectory of the perturbed system converges to a point for which $xy=1$. A numerical illustration is provided in Figure \ref{fig:sinpert}, followed by a proof. It is worth emphasizing that, without Theorem \ref{thm:bounded}, nothing seems to be known about the asymptotic behavior of heavy ball dynamics on this simple example.

\begin{figure}[htb]
    \centering
    \includegraphics[width=.9\textwidth]{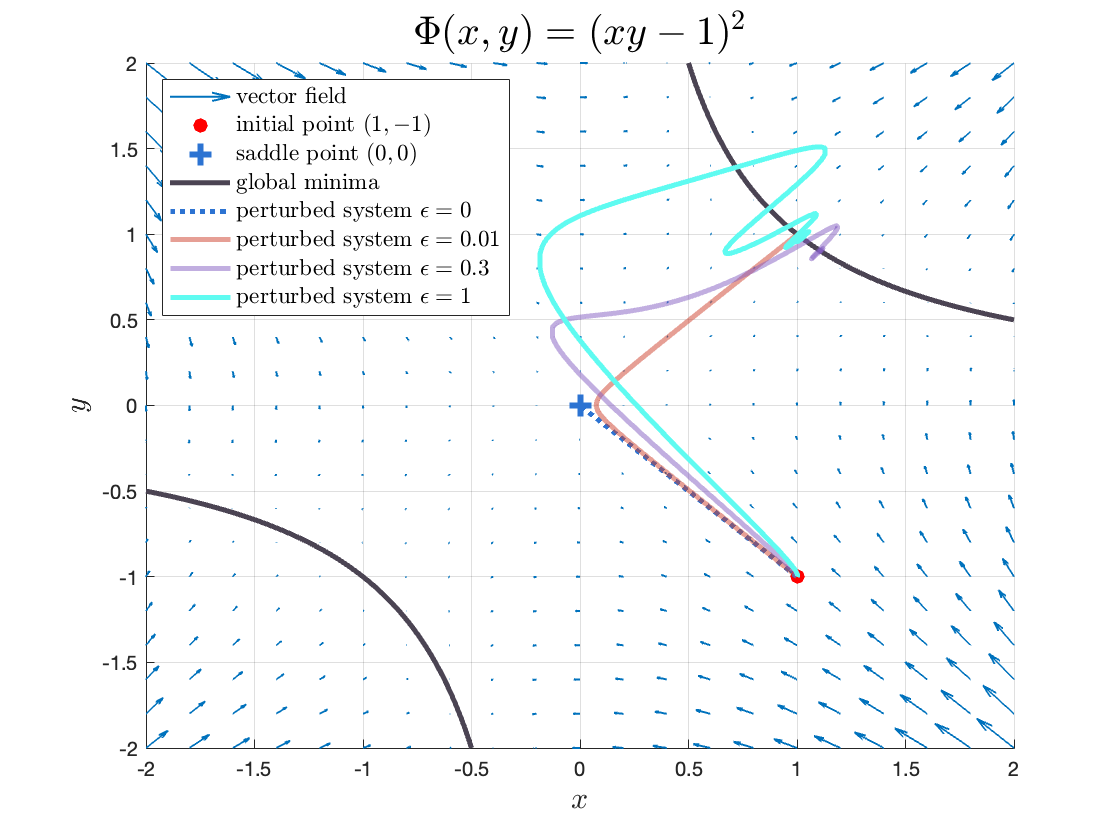}
    \caption{Singular perturbation, $\gamma=0.5$, $\dot{x}_0=\dot{y}_0=0.1$}
    \label{fig:sinpert}
\end{figure}

Since $x^2-y^2$ is constant along trajectories of the degenerate system, the initial condition $(x_0,y_0)=(a,-a)$ implies that $x+y = 0$. This yields the decoupled system
\begin{equation*}
    \left\{
    \begin{array}{l}
    \gamma \dot{x} + 2x(x^2+1) = 0, ~~~ x(0) = a, \\
    \gamma \dot{y} + 2y(y^2+1) = 0, ~~~ y(0)=-a.
    \end{array}
    \right. 
\end{equation*}
It admits the explicit solution
\begin{equation*}
    x(t) = \frac{c}{\sqrt{e^{4t/\gamma}-c^2}}, \quad y(t) = -\frac{c}{\sqrt{e^{4t/\gamma}-c^2}}, \quad c:=\frac{a}{\sqrt{1+a^2}},
\end{equation*}
which converges to $(0,0)$. 

As for the perturbed system, we will show that for all $\epsilon\in(0,\gamma^2/(8a^2+8))$, there exists $t_\epsilon \geqslant 0$ such that $x_\epsilon(t_\epsilon)y_\epsilon(t_\epsilon)= 1/2$. Since the solution $(x_\epsilon,y_\epsilon)$ of the perturbed system is uniformly bounded for all sufficient small $\epsilon>0$, so is its derivative $(\dot{x}_\epsilon,\dot{y}_\epsilon)$. This is a consequence of Lemma \ref{lemma:speed}, which is used to prove Theorem \ref{thm:bounded}. Hence, for all $\epsilon>0$ small enough, the Lyapunov function of the perturbed system evaluated at time $t_\epsilon$ satisfies
\begin{equation*}
    F(t_\epsilon)=(x_\epsilon(t_\epsilon)y_\epsilon(t_\epsilon)-1)^2+\frac{\epsilon}{2}(\dot{x}_\epsilon(t_\epsilon)^2+\dot{y}_\epsilon(t_\epsilon)^2) \leqslant \frac{1}{4}+\frac{1}{4} = \frac{1}{2}.
\end{equation*}

Recall that $F$ is decreasing over $\mathbb{R}_+:=[0,\infty)$ by \cite[Theorem 3.1(ii)]{attouch2000heavy}. As a result, if $(x_\epsilon(t),y_\epsilon(t))\to (0,0)$ as $t\to \infty$, then we obtain the contradiction $1/2 \geqslant F(t_\epsilon) \geqslant \lim_{t\to \infty} F(t) \geqslant 1$. Since $(x_\epsilon(t),y_\epsilon(t))$ converges to one of the critical points of $f$, whose set is given by $\{(0,0)\} \cup \{(x,y) \in \mathbb{R}^2:xy=1\}$, the limit satisfies $xy=1$. 

In the remainder of this section, we reason by contradiction and assume that  there exists $\epsilon\in(0,\gamma^2/(8a^2+8))$ such that for all $t \geqslant 0$, we have $x_\epsilon(t)y_\epsilon(t) \neq 1/2$. Since $x(0)y(0) = -a^2 < 0$, this implies that $x_\epsilon(t)y_\epsilon(t) < 1/2$ for all $t \geqslant 0$. By introducing the new variables $u_\epsilon=x_\epsilon+y_\epsilon$ and $v_\epsilon=x_\epsilon-y_\epsilon$, the perturbed system yields that
\begin{equation*}
    \left\{
    \begin{array}{l}
    \epsilon \ddot{u}_\epsilon + \gamma \dot{u}_\epsilon + 2 u_\epsilon (x_\epsilon y_\epsilon-1) = 0, ~~~u_\epsilon(0) = 0, ~~~ \dot{u}_\epsilon(0) = 2b,\\ %
    \epsilon \ddot{v}_\epsilon +\gamma \dot{v}_\epsilon - 2 v_\epsilon (x_\epsilon y_\epsilon-1) = 0, ~~~ v_\epsilon(0)=2a, ~~~ \dot{v}_\epsilon(0)=0. \\
    \end{array}
    \right.
\end{equation*}

Since $\dot{u}_\epsilon(0)=2b>0$ and $u_\epsilon(0)=0$, by continuity, one has $u_\epsilon(t)>0$ and $\dot{u}_\epsilon(t)>0$ for all $t\in(0,T_1)$, where $T_1:=\inf\{t\in\mathbb{R}_+:\dot{u}(t)=0\}$. Similarly, since $v_\epsilon(0)=2a>0$, by continuity $v_\epsilon(t)>0$ for all $t\in(0,T_2)$ where $T_2:=\inf\{t\in\mathbb{R}_+:v_\epsilon(t)=0\}$. We are going to prove the following claims: 
\begin{enumerate}
    \item $u_\epsilon(t)\geqslant c_1(e^{r_1t}-e^{r_2t})$ for all $t\in[0,T_1)$, where 
    \begin{equation*}
        c_1:=\frac{2b\epsilon}{\sqrt{\gamma^2+4\epsilon}},\quad r_1 := \frac{2}{\sqrt{\gamma^2+4\epsilon}+\gamma},\quad r_2 := -\frac{2}{\sqrt{\gamma^2+4\epsilon}-\gamma}, 
    \end{equation*}
    \item $T_1=\infty$, 
    \item $\dot{v}_\epsilon(t)<0$ for all $t\in[0,T_2)$, 
    \item $v_\epsilon(t)\leqslant (2a+c_2)e^{r_3t}-c_2e^{r_4t}$ for all $t\in[0,T_2)$, where 
    \begin{equation*}
        c_2:=\frac{a\gamma}{\sqrt{\gamma^2-4\epsilon}}-a,\quad r_3 := -\frac{2}{\gamma+\sqrt{\gamma^2-4\epsilon}},\quad r_4 := -\frac{2}{\gamma-\sqrt{\gamma^2-4\epsilon}}, 
    \end{equation*}
    \item $v_\epsilon(t)\geqslant (2a+c_3)e^{r_5t}-c_3e^{r_6t}$ for all $t\in[0,T_2)$, where 
    \begin{gather*}
        c_3:=\frac{a\gamma}{\sqrt{\gamma^2-8(a^2+1)\epsilon}}-a, \\
        r_5 := -\frac{4(a^2+1)}{\gamma+\sqrt{\gamma^2-8(a^2+1)\epsilon}},\quad r_6 := -\frac{4(a^2+1)}{\gamma-\sqrt{\gamma^2-8(a^2+1)\epsilon}}, 
    \end{gather*}
    \item $T_2=\infty$. 
\end{enumerate}
Together, these claims imply that $u_\epsilon(t)\to\infty$ and $v_\epsilon(t)\to 0$ as $t\to\infty$. This yields the contradiction $1/2 > x_\epsilon(t)y_\epsilon(t)=(u_\epsilon(t)^2-v_\epsilon(t)^2)/4\to\infty$.

\textit{Proof of 1.} Since $u_\epsilon(t)>0$ and $x_\epsilon(t)y_\epsilon(t)-1<-1/2$ for all $t\in(0,T_1)$, one has the following differential inequality
\begin{equation}
\label{eq:DI}
    \epsilon \ddot{u}_\epsilon(t) + \gamma \dot{u}_\epsilon(t) - u_\epsilon(t) \geqslant 0, \quad \forall t\in [0,T_1). 
\end{equation}
The corresponding differential equation
\begin{equation*}
    \epsilon \ddot{\bar{u}}_\epsilon(t) + \gamma \dot{\bar{u}}_\epsilon(t) - \bar{u}_\epsilon(t) = 0, ~~~ \bar{u}_\epsilon(0) = 0, ~~~ \dot{\bar{u}}_\epsilon(0) = 2b, \quad \forall t\in [0,T_1),
\end{equation*}
admits the unique solution
\begin{equation*}
    \bar{u}_\epsilon(t) = c_1(e^{r_1t}-e^{r_2t}), \quad \forall t\in [0,T_1). 
\end{equation*}
Let $t_1\in(0,T_1)$. Since $\bar{u}_\epsilon(t_1)>0$, by the comparison theorem \cite[Theorem 2]{popa2002differential}, we have $u_\epsilon(t)\geqslant \bar{u}_\epsilon(t)$ for all $t \in [t_1,T_1)$. Since $u_\epsilon(0)\geqslant \bar{u}_\epsilon(0)$ and $t_1$ is arbitrary in $(0,T_1)$, we actually have $u_\epsilon(t)\geqslant \bar{u}_\epsilon(t)$ for all $t \in [0,T_1)$.

\textit{Proof of 2.} Assume, for the sake of contradiction, that $T_1<\infty$. By continuity, $u_\epsilon(T_1)\geqslant \bar{u}_\epsilon(T_1)>0$, $\dot{u}_\epsilon(T_1)=0$, and $\ddot{u}_\epsilon(T_1)>0$ (by \eqref{eq:DI}). Thus, by continuity, there exists $\delta\in(0,T_1)$ such that $\ddot{u}_\epsilon(t)>0$ for all $t\in[T_1-\delta,T_1]$. This shows that $\dot{u}_\epsilon$ is strictly increasing over $[T_1-\delta,T_1]$. Note that $\dot{u}_\epsilon(T_1-\delta)>0$, thus by monotonicity $0 = \dot{u}_\epsilon(T_1)>\dot{u}_\epsilon(T_1-\delta)>0$, a contradiction. We conclude that $T_1=\infty$. This establishes the second claim.

\textit{Proof of 3.} Since $x_\epsilon(0)y_\epsilon(0)-1<-1/2$, $v_\epsilon(0)>0$ and $\dot{v}_\epsilon(0)=0$, one has $\ddot{v}_\epsilon(0)<0$. By continuity, one has $\dot{v}_\epsilon(t)<0$ for all $t\in(0,T_3)$ where $T_3:=\inf\{t>0:\dot{v}_\epsilon(t)=0\}>0$. It suffices to show $T_3\geqslant T_2$. Assume $T_3<T_2$ for the sake of contradiction. Since $\dot{v}_\epsilon(T_3)=0$, $v_\epsilon(T_3)>0$ and $x_\epsilon(T_3)y_\epsilon(T_3)-1<-1/2$, one can conclude that $\ddot{v}_\epsilon(T_3)<0$. By continuity, there exists $\delta\in(0,T_3)$ such that $\ddot{v}_\epsilon(t)<0$ for all $t\in[T_3-\delta,T_3]$, i.e., $\dot{v}_\epsilon$ is strictly decreasing over $[T_3-\delta,T_3]$. This yields the contradiction $0>\dot{v}_\epsilon(T_3-\delta)>\dot{v}_\epsilon(T_3) = 0$. Hence $T_3\geqslant T_2$ and $\dot{v}_\epsilon(t)<0$ for all $t\in[0,T_2)$. 

\textit{Proof of 4.} Since $v_\epsilon(t)>0$ and $x_\epsilon(t)y_\epsilon(t)-1<-1/2$ for all $t\in(0,T_2)$, one has the following differential inequality
\begin{equation*}
    \epsilon \ddot{v}_\epsilon(t) + \gamma \dot{v}_\epsilon(t) + v_\epsilon(t) \leqslant 0, \quad \forall t\in [0,T_2). 
\end{equation*}
The corresponding differential equation
\begin{equation*}
    \epsilon \ddot{\bar{v}}_\epsilon(t) + \gamma \dot{\bar{v}}_\epsilon(t) + \bar{v}_\epsilon(t) = 0, ~~~ \bar{v}_\epsilon(0) = 2a, ~~~ \dot{\bar{v}}_\epsilon(0) = 0, \quad \forall t\in [0,T_2). 
\end{equation*}
admits the unique solution
\begin{equation*}
    \bar{v}_\epsilon(t) = (2a+c_2)e^{r_3t}-c_2e^{r_4t}, \quad \forall t\in [0,T_2). 
\end{equation*}
Since $\bar{v}_\epsilon(t)>0$ for any $t\in(0,T_2)$ and $v_\epsilon(0)\leqslant\bar{v}_\epsilon(0)$, the comparison theorem \cite[Theorem 2]{popa2002differential} yields $v_\epsilon(t)\leqslant\bar{v}_\epsilon(t)$ for all $t\in[0,T_2)$.

\textit{Proof of 5.} Since $u_\epsilon$ is increasing over $\mathbb{R}_+$ and $v_\epsilon$ is decreasing over $[0,T_2)$, one notices that $x_\epsilon y_\epsilon=(u_\epsilon^2-v_\epsilon^2)/4$ is increasing over $[0,T_2)$. This shows that $x_\epsilon(t) y_\epsilon(t)\geqslant x_\epsilon(0) y_\epsilon(0)=-a^2$. Combined with the fact that $v_\epsilon(t)>0$ for all $t \in (0,T_2)$, one can obtain the following differential inequality
\begin{equation*}
    \epsilon \ddot{v}_\epsilon(t) + \gamma \dot{v}_\epsilon(t) + 2(a^2+1)v_\epsilon(t) \geqslant 0, \quad \forall t\in [0,T_2). 
\end{equation*}
The corresponding differential equation
\begin{equation*}
    \epsilon \ddot{\tilde{v}}_\epsilon(t) + \gamma \dot{\tilde{v}}_\epsilon(t) + 2(a^2+1)\tilde{v}_\epsilon(t) = 0, ~~~ \tilde{v}_\epsilon(0) = 2a, ~~~ \dot{\tilde{v}}_\epsilon(0) = 0, \quad \forall t\in [0,T_2), 
\end{equation*}
admits the unique solution
\begin{equation*}
    \tilde{v}_\epsilon(t) = (2a+c_3)e^{r_5t}-c_3e^{r_6t}, \quad \forall t\in [0,T_2). 
\end{equation*}
Since $\tilde{v}_\epsilon(t)>0$ for any $t\in(0,T_2)$ and $v_\epsilon(0)\geqslant\tilde{v}_\epsilon(0)$, the comparison theorem \cite[Theorem 2]{popa2002differential} yields $v_\epsilon(t)\geqslant\tilde{v}_\epsilon(t)$ for all $t\in[0,T_2)$.

\textit{Proof of 6.} Assume, for the sake of contradiction, that $T_2<\infty$. Then by continuity, one has $v_\epsilon(T_2)\geqslant\tilde{v}_\epsilon(T_2)$. Notice that 
\begin{equation*}
    \dot{\tilde{v}}_\epsilon(t) = (2a+c_3)r_5e^{r_5t}-c_3r_6e^{r_6t}<0, \quad \forall t\in [0,T_2). 
\end{equation*}
Indeed, the initial condition $\dot{\tilde{v}}_\epsilon(0)=0$ implies that $(2a+c_3)r_5=c_3r_6<0$ and $r_5>r_6$. Since $\tilde{v}_\epsilon(t)\to 0$ as $t\to\infty$, by monotonicity, one has $\tilde{v}_\epsilon(T_2)>0$. This yields the contradiction $0 = v_\epsilon(T_2) \geqslant \tilde{v}_\epsilon(T_2)>0$.

\section{Preliminary lemmas}
\label{sec:Preliminary lemmas}

Let $B(a,r)$ and $\mathring{B}(a,r)$ respectively denote the closed and open balls of center $a \in \mathbb{R}^n$ and radius $a\geqslant 0$. Lemma \ref{lemma:tracking} extends \cite[Theorem 5.1]{attouch2000heavy} of Attouch \textit{et al.} by showing that $\epsilon\searrow 0$ is not only a regular perturbation in finite time for fixed initial conditions, but also for a compact set of initial conditions. 

\begin{lemma}\label{lemma:tracking}
    Let $f:\mathbb{R}^n\rightarrow\mathbb{R}$ be a $C^{1,1}_\mathrm{loc}$ lower bounded function, $X_0$ be a compact subset of $\mathbb{R}^n$, and $r_0,T \geqslant 0$. For all $\delta>0$, there exists $\bar{\epsilon}>0$ such that for all $\epsilon\in (0,\bar{\epsilon}]$ and for any global solution $x_\epsilon$ to \eqref{eq:P_eps} initialized in $X_0 \times B(0,r_0)$, there exists a global solution to $x$ to \eqref{eq:P_0} initialized in $X_0$ such that $\|x_\epsilon(t) - x(t)\| \leqslant \delta$ for all $t\in[0,T]$. 
\end{lemma}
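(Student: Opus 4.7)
The plan is to extend the argument of Attouch et al.\ \cite[Theorem 5.1]{attouch2000heavy} from fixed initial conditions to a compact family via a compactness argument. Three ingredients suffice: an $\epsilon$-uniform $C^0$ a priori bound on $x_\epsilon$ over $[0,T]$, the Arzelà--Ascoli theorem, and passage to the limit in the integrated ODE. For the \emph{a priori bound}, the Lyapunov function $E_\epsilon(t):=f(x_\epsilon(t))+\tfrac{\epsilon}{2}\|\dot x_\epsilon(t)\|^2$ satisfies $\dot E_\epsilon=-\gamma\|\dot x_\epsilon\|^2$, hence $\int_0^T\|\dot x_\epsilon(s)\|^2\,ds\leq(E_\epsilon(0)-\inf f)/\gamma$. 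For $\epsilon\leq 1$ and $(x_0,\dot x_0)\in X_0\times B(0,r_0)$, $E_\epsilon(0)\leq C_0:=\max_{X_0}f+r_0^2/2$, so by Cauchy--Schwarz $\|x_\epsilon(t)-x_0\|\leq\sqrt{t}\,\|\dot x_\epsilon\|_{L^2([0,t])}\leq\sqrt{T(C_0-\inf f)/\gamma}$ for all $t\in[0,T]$. Thus every such trajectory stays in a closed ball $B(0,M_1)$ depending only on $X_0,r_0,T$, on which $\nabla f$ is Lipschitz.

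For \emph{extraction by contradiction}, suppose there exist $\delta_0>0$, $\epsilon_n\searrow 0$, and solutions $x_n$ of \eqref{eq:P_eps} with $\epsilon=\epsilon_n$, initialized in $X_0\times B(0,r_0)$, such that $\|x_n-x\|_{L^\infty([0,T])}>\delta_0$ for every global solution $x$ of \eqref{eq:P_0} initialized in $X_0$. By compactness extract a subsequence with $(x_n(0),\dot x_n(0))\to(x_0^\star,\dot x_0^\star)$. The previous step, together with $\|x_n(t)-x_n(s)\|\leq\sqrt{|t-s|(C_0-\inf f)/\gamma}$, shows $(x_n)$ is uniformly bounded and equicontinuous on $[0,T]$, so Arzelà--Ascoli yields a further subsequence converging uniformly to some $y\in C([0,T];\mathbb{R}^n)$; in particular $y(0)=x_0^\star\in X_0$.

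To \emph{identify $y$}, integrate \eqref{eq:P_eps} once on $[0,t]$ to obtain
\[
\epsilon_n\bigl(\dot x_n(t)-\dot x_n(0)\bigr)+\gamma\bigl(x_n(t)-x_n(0)\bigr)+\int_0^t\nabla f(x_n(s))\,ds=0.
\]
Here $\epsilon_n\dot x_n(0)\to 0$ since $\|\dot x_n(0)\|\leq r_0$, while $\|\epsilon_n\dot x_n(t)\|^2\leq 2\epsilon_n(E_{\epsilon_n}(0)-\inf f)\leq 2\epsilon_n(C_0-\inf f)\to 0$ uniformly in $t\in[0,T]$. The integral term converges to $\int_0^t\nabla f(y(s))\,ds$ by uniform convergence $x_n\to y$ and Lipschitz continuity of $\nabla f$ on $B(0,M_1)$. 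In the limit, $\gamma(y(t)-x_0^\star)+\int_0^t\nabla f(y(s))\,ds=0$, so $y$ is a $C^1$ global solution of \eqref{eq:P_0} initialized at $x_0^\star\in X_0$. But then $\|x_n-y\|_{L^\infty([0,T])}\to 0$ along the extracted subsequence, contradicting the standing assumption $\|x_n-y\|_{L^\infty([0,T])}>\delta_0$.

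The main obstacle is the a priori bound: without coercivity of $f$, the pointwise velocity estimate $\|\dot x_\epsilon(t)\|=O(1/\sqrt{\epsilon})$ coming directly from the energy is useless as $\epsilon\searrow 0$. What rescues the argument is that $\|\dot x_\epsilon\|_{L^2([0,T])}$ is $\epsilon$-uniformly bounded by the dissipation identity, which via Cauchy--Schwarz upgrades to an $\epsilon$-uniform $C^0$ bound on $x_\epsilon$. Everything else is then reduced to a standard equicontinuity/compactness argument and a routine passage to the limit in the integrated ODE.
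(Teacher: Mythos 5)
Your proposal is correct, and its skeleton (energy dissipation giving an $\epsilon$-uniform $L^2$ bound on $\dot x_\epsilon$, Cauchy--Schwarz for equicontinuity and a $C^0$ bound, contradiction plus Arzel\`a--Ascoli) matches the paper's proof. Where you genuinely diverge is in identifying the limit $y$ as a solution of \eqref{eq:P_0}: the paper keeps the equation in differential form, extracts a weak $L^2$ limit $v$ of $\dot x_{\epsilon_k}$, shows $\dot u = v$ a.e.\ and that $\epsilon_k\ddot x_{\epsilon_k}$ vanishes in the sense of distributions, and then bootstraps regularity before invoking Picard--Lindel\"of; you instead integrate \eqref{eq:P_eps} once and kill the inertial term through the pointwise estimate $\|\epsilon_n\dot x_n(t)\|\leqslant\sqrt{2\epsilon_n(C_0-\inf f)}$, which follows directly from decay of the energy. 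Your route is more elementary --- no weak compactness, no distributions, and the $C^1$ regularity of $y$ falls out of the integrated identity for free --- while the paper's distributional argument is the one that generalizes when only $L^2$-type control of the velocities is available. One small point to make explicit: your limit $y$ is a solution of \eqref{eq:P_0} on $[0,T]$ only, whereas the lemma compares $x_\epsilon$ with a \emph{global} solution initialized in $X_0$; since $\nabla f$ is locally Lipschitz, uniqueness (Picard--Lindel\"of) together with the global existence result cited in the introduction shows $y$ is the restriction of that global solution, which closes the contradiction exactly as in the paper.
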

\begin{proof}
The set of solutions $\{x_\epsilon\}_{\epsilon \in (0,1]}$ to \eqref{eq:P_eps} up to time $T$ initialized in $X_0 \times B(0,r_0)$ is precompact w.r.t. the topology of uniform convergence. Indeed, since
    \begin{equation*}
        \frac{d}{dt}\left(f(x_\epsilon)+\frac{\epsilon}{2}\|\dot{x}_\epsilon\|^2\right) = -\gamma\|\dot{x}_\epsilon\|^2
    \end{equation*}
   (as observed in \cite{haraux1986asymptotics,alvarez2000minimizing,attouch2000heavy} when $\epsilon = 1$) we have
    \begin{subequations}
    \label{eq:L2}
        \begin{align}
        \int_0^T \|\dot{x}_\epsilon(\tau)\|^2d\tau & = \frac{1}{\gamma} \left( f(x_\epsilon(0)) - f(x_\epsilon(T)) + \frac{\epsilon}{2}\|\dot{x}_\epsilon(0)\|^2 - \frac{\epsilon}{2}\|\dot{x}_\epsilon(T)\|^2\right) \\
        & \leqslant \frac{1}{\gamma}\left(\sup_{X_0}f - \inf_{\mathbb{R}^n} f+\frac{\epsilon r_0^2}{2}\right).
        \end{align}
    \end{subequations}
    $\{x_\epsilon\}_{\epsilon\in(0,1]}$ is equicontinuous because for all $0\leqslant s \leqslant t \leqslant T$ we have
        \begin{align*}
            \|x_\epsilon(s)-x_\epsilon(t)\| & \leqslant \int_s^t\|\dot{x}_\epsilon(\tau)\|d\tau \\
            & \leqslant \sqrt{t-s} \int_s^t\|\dot{x}_\epsilon(\tau)\|^2d\tau \\
            & \leqslant \sqrt{t-s} \sqrt{\frac{1}{\gamma}\left(\sup_{X_0}f - \inf_{\mathbb{R}^n} f + \frac{r_0^2}{2}\right)}
        \end{align*}
    and $\{x_\epsilon(t)\}_{\epsilon\in(0,1]}$ is bounded for all $t\in [0,T]$ because
    \begin{subequations}
    \begin{align}
        \|x_\epsilon(t)\| &\leqslant \|x_\epsilon(0)\| + \int_0^T\|\dot{x}_\epsilon(\tau)\|d\tau \\
        &\leqslant \sup_{x\in X_0}\|x\| + \sqrt{T}\sqrt{\frac{1}{\gamma}\left(\sup_{X_0}f - \inf_{\mathbb{R}^n} f  + \frac{r_0^2}{2}\right)}. \label{eq:x(t)} 
    \end{align}
    \end{subequations}
    
    We next show that for any sequence $\epsilon_k \searrow 0$, there exists a subsequence (again denoted $\epsilon_k$) such that $x_{\epsilon_k}$ converges uniformly on $[0,T]$ to a solution of \eqref{eq:P_0} initialized in $X_0$. The conclusion of the lemma then readily follows. Assume for the sake of contradiction that there exists $\delta>0$ such that, for all $\bar{\epsilon}>0$, there exists $\epsilon \in (0,\bar{\epsilon}]$ and a global solution $x_\epsilon$ to \eqref{eq:P_eps} initialized in $X_0 \times B(0,r_0)$ such that, for all global solution to $x$ to \eqref{eq:P_0} initialized in $X_0$, it holds that $\|x_\epsilon(t) - x(t)\| > \delta$ for some $t\in[0,T]$. We can then generate a sequence $\epsilon_k \searrow 0$ such that, for all global solution to $x$ to \eqref{eq:P_0} initialized in $X_0$, it holds that $\|x_{\epsilon_k}(t) - x(t)\| > \delta$ for some $t\in[0,T]$. Since there exists a subsequence (again denoted $\epsilon_k$) such that $x_{\epsilon_k}$ converges on $[0,T]$ uniformly to a solution of \eqref{eq:P_0}, we obtain a contradiction.
    
    Consider a sequence $\epsilon_k \searrow 0$. By the Arzelà-Ascoli theorem \cite[Theorem 1 p. 13]{aubin1984differential}, there exists a subsequence (again denoted $\epsilon_k$) such that $x_{\epsilon_k}$ converges uniformly on $[0,T]$ to a continuous function $u$. Recall that $(\dot{x}_{\epsilon_k})_{k\in \mathbb{N}}$ is bounded in $L^2([0,T],\mathbb{R}^n)$ due to \eqref{eq:L2}. By further taking a subsequence, $\dot{x}_{\epsilon_k}$ thus converges weakly to a function $v$ in $L^2([0,T],\mathbb{R}^n)$ \cite[Theorem 17, p. 283]{Fitzpatrick2010}. 
    
    Naturally, $\dot{u}=v$ almost everywhere on $(0,T)$. Indeed, since $x_{\epsilon_k}$ is absolutely continuous, for all $s,t\in [0,T]$ we have $x_{\epsilon_k}(t) - x_{\epsilon_k}(s) = \int_s^t \dot{x}_{\epsilon_k}(\tau)d\tau$
    and taking the limit yields $u(t) - u(s) = \int_s^t v(\tau)d\tau$.
    In addition, $\ddot{x}_{\epsilon_k}$ converges to $\dot{v}$ as a distribution. Indeed, for any test function $\varphi: (0,T) \rightarrow \mathbb{R}^n$ (i.e., infinitely differentiable with compact support), we have
    \begin{align*}
        \int_0^T \langle \ddot{x}_{\epsilon_k}(\tau),\varphi(\tau) \rangle d\tau & = \int_0^T \langle \dot{x}_{\epsilon_k}(\tau),\dot{\varphi}(\tau) \rangle d\tau \\
        & \rightarrow \int_0^T \langle v(\tau),\dot{\varphi}(\tau) \rangle d\tau \\
        & = \int_0^T \langle \dot{v}(\tau),\varphi(\tau) \rangle d\tau.
    \end{align*}
Passing to the limit in $\epsilon_k\ddot{x}_{\epsilon_k} + \gamma\dot{x}_{\epsilon_k} + \nabla f(x_{\epsilon_k}) = 0$ yields 
$\gamma \dot{u} + \nabla f(u) = 0$ in the distribution sense. Since $\gamma\dot{u} + \nabla f(u) \in L^2([0,T],\mathbb{R}^n) \subset L^1_\mathrm{loc}([0,T],\mathbb{R}^n)$, it holds that $\gamma \dot{u} + \nabla f(u) = 0$ almost everywhere on $(0,T)$. As a result, $u(t) - u(0) = \int_0^t \dot{u}(\tau)\;d\tau = -\int_0^t \nabla f(u(\tau))/\gamma d\tau$. As the integral of a continuous function, $u$ is $C^1$ on $(0,T)$ \cite[Theorem 6.20]{rudin1964principles}. Hence $\gamma \dot{u} + \nabla f(u) = 0$ everywhere on $(0,T)$. Since $f\in C^{1,1}_\mathrm{loc}$, by the Picard–Lindel{\"o}f theorem \cite[Theorem 3.1 p. 12]{coddington1955theory}, $u$ is a solution to \eqref{eq:P_0} on $[0,T]$.
\end{proof}

Attouch \textit{et al.} \cite[Theorem 3.1 (ii)]{attouch2000heavy} show that the velocities of heavy ball trajectories remain bounded throughout time. Lemma \ref{lemma:speed} provides conditions ensuring that they are uniformly bounded with respect to the mass $\epsilon$.

\begin{lemma}\label{lemma:speed}
    Let $f:\mathbb{R}^n\rightarrow\mathbb{R}$ be a $C^{1,1}_\mathrm{loc}$ lower bounded function, $X \subset \mathbb{R}^n$ be bounded, and $\gamma,r_0>0$. There exists $r>0$ such that for all $\epsilon,T>0$, if $x_\epsilon:[0,T]\rightarrow X$ is a solution to \eqref{eq:P_eps} 
    such that $\|\dot{x}_\epsilon(0)\|\leqslant r_0$, then $\|\dot{x}_\epsilon(t)\|\leqslant r$ for all $t\in[0,T]$.
\end{lemma}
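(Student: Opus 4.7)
The plan is to bypass the standard Lyapunov function $\tfrac{\epsilon}{2}\|\dot{x}_\epsilon\|^2 + f(x_\epsilon)$ used in Lemma \ref{lemma:tracking}, since the pointwise velocity bound it yields is of order $1/\sqrt{\epsilon}$ and therefore useless as $\epsilon\searrow 0$. Instead, I will exploit the fact that the friction term dominates the gradient as soon as the speed is large. Because $x_\epsilon(t)\in X$ and $\overline{X}$ is compact, the continuous function $\nabla f$ is bounded on $\overline{X}$, so $L := \sup_{x\in\overline{X}}\|\nabla f(x)\|<\infty$. I claim that $r := \max(r_0,L/\gamma)$, which depends only on $r_0$, $\gamma$, $f$, and $X$ (not on $\epsilon$ or $T$), is a valid choice.

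The first step is to derive a scalar differential inequality for $\psi(t) := \|\dot{x}_\epsilon(t)\|^2$. Since $f\in C^{1,1}_{\mathrm{loc}}$, classical ODE theory makes $x_\epsilon$ of class $C^2$, so $\psi$ is $C^1$. Taking the inner product of \eqref{eq:P_eps} with $2\dot{x}_\epsilon(t)$ and applying Cauchy--Schwarz yields
\begin{equation*}
    \psi'(t) \;=\; \frac{2}{\epsilon}\bigl(-\gamma\,\psi(t)-\langle\nabla f(x_\epsilon(t)),\dot{x}_\epsilon(t)\rangle\bigr) \;\leqslant\; \frac{2}{\epsilon}\sqrt{\psi(t)}\,\bigl(L-\gamma\sqrt{\psi(t)}\bigr).
\end{equation*}
The crucial feature is that the right-hand side is strictly negative as soon as $\sqrt{\psi(t)}>L/\gamma$.

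The second step is a barrier argument showing that $\{\psi\leqslant r^2\}$ is forward-invariant. Assume for contradiction that $\psi(t^*)>r^2$ for some $t^*\in(0,T]$, noting that $\psi(0)\leqslant r_0^2\leqslant r^2$. Define $t_0:=\sup\{t\in[0,t^*]:\psi(t)\leqslant r^2\}$. By continuity $\psi(t_0)=r^2$ and $\psi(t)>r^2\geqslant (L/\gamma)^2$ for all $t\in(t_0,t^*]$. The differential inequality then forces $\psi'(t)<0$ on $(t_0,t^*]$, so $\psi(t^*)<\psi(t_0)=r^2$, a contradiction. Hence $\|\dot{x}_\epsilon(t)\|\leqslant r$ for every $t\in[0,T]$.

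The only real obstacle is conceptual rather than technical: one must recognize that the natural energy identity is too crude and that instead one should work with $\|\dot{x}_\epsilon\|^2$ alone, using the confinement $x_\epsilon([0,T])\subseteq X$ to control $\nabla f$ uniformly in $\epsilon$. Once the inequality $\psi'\leqslant\tfrac{2}{\epsilon}\sqrt{\psi}(L-\gamma\sqrt{\psi})$ is written down, the invariance of the disc of radius $r$ is immediate, and the factor $1/\epsilon$, while blowing up, is harmless because it multiplies a strictly negative quantity on the region where the bound could fail.
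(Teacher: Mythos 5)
Your proof is correct, and it takes a genuinely different and more elementary route than the paper's. The paper follows Attouch \emph{et al.}: it forms difference quotients of $\dot{x}_\epsilon$ and $\nabla f(x_\epsilon)$, invokes a Lipschitz constant of $\nabla f$ on $X$ together with the energy identity $\frac{d}{dt}\bigl(f(x_\epsilon)+\frac{\epsilon}{2}\|\dot{x}_\epsilon\|^2\bigr)=-\gamma\|\dot{x}_\epsilon\|^2$ to bound $\|\epsilon\ddot{x}_\epsilon(t)\|$, deduces from the equation a velocity bound that increases with $\epsilon$, pairs it with the crude energy bound $\sqrt{\frac{2}{\epsilon}(\sup_X f-\inf_{\mathbb{R}^n} f)+r_0^2}$ that decreases with $\epsilon$, and obtains an $\epsilon$-free constant by taking the minimum of the two and maximizing over $\epsilon$. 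You instead note that $\psi:=\|\dot{x}_\epsilon\|^2$ is $C^1$ (indeed $\ddot{x}_\epsilon=-(\gamma\dot{x}_\epsilon+\nabla f(x_\epsilon))/\epsilon$ is continuous) and satisfies $\psi'\leqslant\frac{2}{\epsilon}\sqrt{\psi}\,(L-\gamma\sqrt{\psi})$ with $L:=\sup_{\overline{X}}\|\nabla f\|$, the Cauchy--Schwarz step being legitimate because $\|\cdot\|$ is induced by $\langle\cdot,\cdot\rangle$; the first-exit/barrier argument then shows the ball never exceeds the ``terminal velocity'' $L/\gamma$, so $r=\max\{r_0,L/\gamma\}$ works, manifestly independent of $\epsilon$ and $T$ (the $1/\epsilon$ factor only strengthens the damping where the bound could fail). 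Your argument needs only boundedness of $\nabla f$ on $\overline{X}$ --- neither the local Lipschitz constant of $\nabla f$ nor lower boundedness of $f$ --- and produces an explicit constant, which is all that the proof of Theorem~\ref{thm:bounded} uses from this lemma; what the paper's heavier computation buys is the additional control of $\|\epsilon\ddot{x}_\epsilon(t)\|$ in the spirit of \cite[Theorem 3.1 (ii)]{attouch2000heavy}, but that extra information is not exploited elsewhere in the manuscript.
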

\begin{proof}

    Let $t \in [0,T)$ and $h \in(0,T-t)$. Due to \eqref{eq:P_eps}, the difference quotients 
    \begin{equation*}
        u_{\epsilon,h}(t) := \frac{\dot{x}_\epsilon(t+h)-\dot{x}_\epsilon(t)}{h}, \quad v_{\epsilon,h}(t):=\frac{\nabla f(x_\epsilon(t+h))-\nabla f(x_\epsilon(t))}{h}. 
    \end{equation*}
    satisfy $\epsilon \dot{u}_{\epsilon,h}(t)+\gamma u_{\epsilon,h}(t) + v_{\epsilon,h}(t) = 0$. Following \cite[Equations (5.10)-(5.13)]{attouch2000heavy}, we take the inner product with $\epsilon u_{\epsilon,h}$ and obtain
    \begin{align*}
        \epsilon^2 \langle \dot{u}_{\epsilon,h}(t) , u_{\epsilon,h}(t) \rangle + \epsilon \gamma \| u_{\epsilon,h}(t) \|^2  &= \epsilon \langle -v_{\epsilon,h}(t) , u_{\epsilon,h}(t) \rangle \\
        & \leqslant \epsilon \|\gamma^{-1/2} v_{\epsilon,h}(t)\| \|\gamma^{1/2} u_{\epsilon,h}(t)\| \\
        & \leqslant \frac{\epsilon}{2\gamma} \|v_{\epsilon,h}(t)\|^2 + \frac{\epsilon \gamma}{2} \|u_{\epsilon,h}(t)\|^2 \\
        & = \frac{\epsilon}{2\gamma} \left\|\frac{\nabla f(x_\epsilon(t+h))-\nabla f(x_\epsilon(t))}{h}\right\|^2 + \frac{\epsilon \gamma}{2} \|u_{\epsilon,h}(t)\|^2 \\
        & \leqslant \frac{\epsilon L^2}{2\gamma} \left\|\frac{x_\epsilon(t+h)-x_\epsilon(t)}{h}\right\|^2 + \frac{\epsilon \gamma}{2} \|u_{\epsilon,h}(t)\|^2
    \end{align*}
    where $L$ is a Lipschitz constant of the gradient of $f$ on $X$.
    Integrating yields
    \begin{equation}
    \label{eq:h}
        \|\epsilon u_{\epsilon,h}(t)\|^2 \leqslant \frac{\epsilon L^2}{\gamma} \int_0^t \left\|\frac{x_\epsilon(\tau+h)-x_\epsilon(\tau)}{h}\right\|^2 d\tau + \|\epsilon u_{\epsilon,h}(0)\|^2 . 
    \end{equation}
    By the mean value theorem and \eqref{eq:L2}, we have
    \begin{equation}
    \label{eq:eps_bound}
        \left\|\frac{x_\epsilon(t+h)-x_\epsilon(t)}{h}\right\| \leqslant \sup_{[t,t+h]} \|\dot{x}_\epsilon\| \leqslant \sqrt{\frac{2}{\epsilon}\left(\sup_X f -\inf_{\mathbb{R}^n} f\right)+r_0^2}.
    \end{equation}
    By the dominated convergence theorem, taking the limit as $h\rightarrow 0$ in \eqref{eq:h} yields
    \begin{align*}
        \|\epsilon \ddot{x}_\epsilon(t)\|^2 & \leqslant \frac{\epsilon L^2}{\gamma}\int_0^t\|\dot{x}_\epsilon(\tau)\|^2d\tau + \|\epsilon \ddot{x}_\epsilon(0)\|^2 \\
        & \leqslant \frac{\epsilon L^2}{\gamma}\int_0^t\|\dot{x}_\epsilon(\tau)\|^2d\tau + (\|\gamma \dot{x}_\epsilon(0)\| + \|\nabla f(x_\epsilon(0))\|)^2.
    \end{align*}
    We conclude that 
    \begin{align*}
        \|\dot{x}_\epsilon(t)\| & = \frac{1}{\gamma} \| \epsilon \ddot{x}_\epsilon(t) + \nabla f(x_\epsilon(t))\| \\
        & \leqslant \frac{1}{\gamma} (\| \epsilon \ddot{x}_\epsilon(t) \| +  \| \nabla f(x_\epsilon(t))\|) \\
        & \leqslant \frac{1}{\gamma} \left[ \sqrt{\frac{\epsilon L^2}{\gamma}\int_0^t\|\dot{x}_\epsilon(\tau)\|^2d\tau + (\|\gamma \dot{x}_\epsilon(0)\| + \|\nabla f(x_\epsilon(0))\|)^2} + \| \nabla f(x_\epsilon(t))\| \right] \\
        & \leqslant \frac{1}{\gamma} \left[ \sqrt{\frac{\epsilon L^2}{\gamma^2}\left(\sup_{X}f - \inf_{\mathbb{R}^n} f+\frac{\epsilon r_0^2}{2}\right) + (\gamma r_0 + \sup_X \|\nabla f\|)^2} + \sup_X \|\nabla f\| \right]. \\
    \end{align*}
    The above upper bound is increasing with $\epsilon$, while the upper bound in \eqref{eq:eps_bound} decreases with $\epsilon$. Taking the minimum of the two and maximizing over $\epsilon>0$ yields a bound $r$ that is independent of $\epsilon$. Since $\dot{x}$ is continuous, the bound also holds at time $t = T$.
\end{proof}

The Kurdyka-\L{}ojasiewicz inequality enables one to relate the length of gradient trajectories with the function variation \cite[Theorem 2 b)]{kurdyka1998gradients} (see also \cite[Proposition 7]{josz2023global}). Lemma \ref{lemma:length} provides such a length formula for heavy ball dynamics.

\begin{lemma}
\label{lemma:length}
Let $f:\mathbb{R}^n\rightarrow\mathbb{R}$ be a $C^{1,1}_\mathrm{loc}$ lower bounded definable function, $X$ be a bounded subset of $\mathbb{R}^n$, and $\gamma,r,\bar{\epsilon}>0$. There exist $\eta>0$ and a diffeomorphism $\varphi:\mathbb{R}_+\rightarrow \mathbb{R}_+$ such that, for all $\epsilon \in (0,\bar{\epsilon}]$ and $T\geqslant 0$, if $x_\epsilon:[0,T]\rightarrow X$ is a solution to $(P_\epsilon)$ such that $\|\dot{x}_\epsilon(t)\| \leqslant r$ for all $t\in[0,T]$, then
\begin{equation}\label{eq:length_formula}
    \int_0^T\|\dot{x}_\epsilon(t)\|dt \leqslant \varphi\left(f(x_\epsilon(0))-f(x_\epsilon(T))+\eta\epsilon\right).
\end{equation}
\end{lemma}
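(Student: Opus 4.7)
The plan is to combine the standard heavy-ball Lyapunov function with the Kurdyka-\L{}ojasiewicz (KL) inequality on the bounded set $\overline{X}$, in the spirit of \cite{begout2015damped}. First, I set $F_\epsilon(t) := f(x_\epsilon(t)) + \frac{\epsilon}{2}\|\dot{x}_\epsilon(t)\|^2$, which satisfies $\dot{F}_\epsilon = -\gamma\|\dot{x}_\epsilon\|^2$. Since $\|\dot{x}_\epsilon(t)\|\leqslant r$ for all $t\in[0,T]$, setting $\eta := r^2/2$ yields
$$f(x_\epsilon(0)) - f(x_\epsilon(T)) + \eta\epsilon \;\geqslant\; F_\epsilon(0) - F_\epsilon(T),$$
so it suffices to produce a diffeomorphism $\tilde{\varphi}$ of $\mathbb{R}_+$ with $\int_0^T\|\dot{x}_\epsilon(t)\|\,dt \leqslant \tilde{\varphi}(F_\epsilon(0) - F_\epsilon(T))$.

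Since $f$ is definable and $\overline{X}$ is compact, the set of critical values of $f|_{\overline{X}}$ is finite, and by a uniform Kurdyka-\L{}ojasiewicz inequality \cite{kurdyka1998gradients,bolte2010characterizations}, there exist a concave $C^1$ definable function $\psi:[0,\rho)\to\mathbb{R}_+$ with $\psi(0)=0$ and $\psi'>0$, and a constant $\sigma>0$, such that $\psi'(f(x)-c)\|\nabla f(x)\|\geqslant 1$ for all $x\in\overline{X}$ and every critical value $c$ of $f|_{\overline{X}}$ with $0<f(x)-c<\sigma$. Outside neighborhoods of the critical values, $\|\nabla f\|$ is bounded below on $\overline{X}$ by compactness.

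The core step is to design a composite Lyapunov function of the form
$$W_\epsilon(t) \;:=\; \alpha\,\psi(F_\epsilon(t)-c) \,+\, \beta\sqrt{\epsilon}\, h(x_\epsilon(t),\dot{x}_\epsilon(t))$$
with $\alpha,\beta>0$ and a suitable correction $h$, and to verify $\dot{W}_\epsilon(t)\leqslant -\|\dot{x}_\epsilon(t)\|$ whenever $F_\epsilon(t)-c$ lies in the KL window around some critical value $c$ of $f|_{\overline{X}}$. Integration then yields $\int\|\dot{x}_\epsilon\|\,dt \leqslant W_\epsilon(0)-W_\epsilon(T)$, which is controlled by a concave function of $F_\epsilon(0)-F_\epsilon(T)$ plus a term of order $\sqrt{\epsilon}$. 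In the complementary regime, where $F_\epsilon$ is bounded away from all critical values, $\|\nabla f\|\geqslant\delta>0$, and combining this with the ODE $\gamma\dot{x}_\epsilon = -\nabla f(x_\epsilon)-\epsilon\ddot{x}_\epsilon$ and the uniform bound $\epsilon\|\ddot{x}_\epsilon\|\leqslant C$ from the proof of Lemma \ref{lemma:speed} gives $\|\dot{x}_\epsilon\|\geqslant\delta'>0$ for small enough $\epsilon$; the Lyapunov decay $|\dot{F}_\epsilon|\geqslant\gamma(\delta')^2$ then forces the time spent in this regime to be linear in $F_\epsilon(0)-F_\epsilon(T)$, producing a length bound linear in this quantity via $\|\dot{x}_\epsilon\|\leqslant r$. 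Stitching the two regimes across the finitely many critical values yields $\tilde{\varphi}$.

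The main obstacle is the construction of the correction $h$ so that $\dot{W}_\epsilon$ dominates $\|\dot{x}_\epsilon\|$ uniformly in $\epsilon\in(0,\bar{\epsilon}]$. Unlike gradient flow, where the identity $\|\dot{x}\|=\|\nabla f\|/\gamma$ instantly transfers KL to the length integrand, for heavy ball the relation $\gamma\dot{x}_\epsilon = -\nabla f(x_\epsilon) - \epsilon\ddot{x}_\epsilon$ introduces the acceleration term, which must be absorbed using the uniform bound on $\epsilon\|\ddot{x}_\epsilon\|$ from the proof of Lemma \ref{lemma:speed}. The final diffeomorphism $\varphi$ is essentially $\psi$ composed with a linear reparametrization that collects the constants arising in the stitching, while $\eta$ absorbs $r^2/2$ together with the $\sqrt{\epsilon}$-corrections appearing in $W_\epsilon$.
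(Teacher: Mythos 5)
Your outline assembles the right ingredients (energy decay, finitely many critical values, KL, stitching), but the step that actually constitutes the lemma is missing. You postulate a correction term $h$ and the inequality $\dot{W}_\epsilon \leqslant -\|\dot{x}_\epsilon\|$ without constructing $h$ or verifying anything; yet this is the crux, and it is not routine here. The natural candidates (e.g.\ $h(x,v)=\langle\nabla f(x),v\rangle$, as in the $C^2$ arguments of \cite{begout2015damped}) require differentiating $\nabla f$, which is unavailable since $f$ is only $C^{1,1}_{\mathrm{loc}}$. Moreover, the KL inequality you invoke controls $\psi'(f(x)-c)\|\nabla f(x)\|$, whereas what must be dominated is $\|\dot{x}_\epsilon\|$; converting one into the other through $\gamma\dot{x}_\epsilon=-\nabla f(x_\epsilon)-\epsilon\ddot{x}_\epsilon$ \emph{uniformly in} $\epsilon\in(0,\bar{\epsilon}]$, with a desingularizing function independent of $\epsilon$ even though your $W_\epsilon$ depends on $\epsilon$, is exactly the difficulty. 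The paper resolves it by working in phase space with $H_\alpha(u_\epsilon,x_\epsilon)$, $u_\epsilon=x_\epsilon+\beta\dot{x}_\epsilon$, $H_\alpha(x,y)=f(x)+\alpha\|x-y\|^2$ (so no second derivatives of $f$ are needed), proving $\tfrac{d}{dt}H_\alpha\leqslant -a\|\dot{x}_\epsilon\|^2-b\|\ddot{x}_\epsilon\|^2$ and $\|\nabla H_\alpha\|\leqslant c\|\dot{x}_\epsilon\|+\epsilon\|\ddot{x}_\epsilon\|$, invoking a KL inequality uniform over the family $\alpha\geqslant 1/4$, and tuning $\beta=\beta(\epsilon)$ so that the constant $c/a+\epsilon^2/(bc)$ stays bounded as $\epsilon\searrow 0$. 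None of this bookkeeping (or any substitute for it) appears in your sketch, so the claimed decay of $W_\epsilon$ is an assumption rather than a proof.

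The complementary regime argument is, moreover, incorrect as stated. The proof of Lemma \ref{lemma:speed} gives only $\epsilon\|\ddot{x}_\epsilon\|\leqslant C$ with $C$ of order one (indeed $\epsilon\ddot{x}_\epsilon(0)=-(\gamma\dot{x}_\epsilon(0)+\nabla f(x_\epsilon(0)))$), not a bound tending to zero with $\epsilon$, so $\|\nabla f(x_\epsilon)\|\geqslant\delta$ together with that bound does not yield $\|\dot{x}_\epsilon\|\geqslant\delta'>0$. The conclusion itself is false in general: heavy-ball trajectories have turning points where $\dot{x}_\epsilon(t)=0$ while $\|\nabla f(x_\epsilon(t))\|\geqslant\delta$ (already at $t=0$ one may have $\dot{x}_\epsilon(0)=0$ with $f(x_0)$ far from every critical value), so the time spent, and hence the length accrued, away from the KL windows cannot be bounded pointwise this way; at best an integral argument is needed, and none is given. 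A further unaddressed mismatch is that your KL window is expressed through $\psi'(F_\epsilon-c)$ while the inequality holds for $\psi'(f(x_\epsilon)-c)$; the gap $\tfrac{\epsilon}{2}\|\dot{x}_\epsilon\|^2$ interacts with the blow-up of $\psi'$ at $0$ and must be controlled. For these reasons the proposal, as written, does not prove the lemma.
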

\begin{proof}
Let $\epsilon \in (0,\bar{\epsilon}]$, $T\geqslant 0$, and $x_\epsilon:[0,T]\rightarrow X$ be a solution to $(P_\epsilon)$ such that $\|\dot{x}_\epsilon(t)\| \leqslant r$ for all $t\in[0,T]$. The proof is devoted to building $\eta>0$ and a diffeomorphism $\varphi:\mathbb{R}_+\rightarrow \mathbb{R}_+$ satisfying \eqref{eq:length_formula} that are independent of $\epsilon$ and $T$. They will be displayed at the end of the proof in  \eqref{eq:eta_varphi}. We construct $\varphi$ by introducing two parameters $\alpha$ and $\beta$ whose values we will tune throughout the proof in order to obtain the desired properties.

Following Zavriev and Kostyuk \cite{zavriev1993heavy}, consider the Lyapunov function $H_\alpha: \mathbb{R}^n \times \mathbb{R}^n \rightarrow \mathbb{R}$ defined by $H_\alpha(x,y) := f(x) + \alpha \|x-y\|^2$ where $\alpha>0$.  Following Bo{\c{t}} \textit{et al.} \cite[Theorem 3.2]{boct2020second}, consider the auxiliary dynamics $u_\epsilon=x_\epsilon+\beta\dot{x}_\epsilon$ where $\beta>0$. Notice that $H_\alpha(u_\epsilon,x_\epsilon) = f(x_\epsilon+\beta\dot{x}_\epsilon) + \alpha \beta^2\|\dot{x}_\epsilon\|^2$. Let $L \geqslant \max \{1,\bar{\epsilon}\}$ denote a Lipschitz constant of $f$ and $\nabla f$ on $B(X,r) := X+B(0,r)$. Since $u_\epsilon(t)\in B(X,r)$ and $x_\epsilon(t)\in X$ for all $t\in[0,T]$, we have
\begin{align*}
    \frac{d}{dt}H_\alpha(u_\epsilon,x_\epsilon) = & \langle \nabla f(x_\epsilon+\beta\dot{x}_\epsilon), \dot{x}_\epsilon+\beta\ddot{x}_\epsilon\rangle + 2\alpha\beta^2\langle \dot{x}_\epsilon,\ddot{x}_\epsilon\rangle \\
    = & \langle \nabla f(x_\epsilon+\beta\dot{x}_\epsilon)-\nabla f(x_\epsilon), \dot{x}_\epsilon+\beta\ddot{x}_\epsilon\rangle + \langle \nabla f(x_\epsilon), \dot{x}_\epsilon+\beta\ddot{x}_\epsilon\rangle 
    \\[2mm]
    &+ 2\alpha\beta^2\langle \dot{x}_\epsilon,\ddot{x}_\epsilon\rangle \\[2mm]
    \leqslant & L\beta\|\dot{x}_\epsilon\|\|\dot{x}_\epsilon+\beta\ddot{x}_\epsilon\|
    -\langle \gamma \dot{x}_\epsilon + \epsilon\ddot{x}_\epsilon, \dot{x}_\epsilon+\beta\ddot{x}_\epsilon\rangle + 2\alpha\beta^2\langle \dot{x}_\epsilon,\ddot{x}_\epsilon\rangle \\[2mm]
    \leqslant & L\beta(\|\dot{x}_\epsilon\|^2 + \beta\|\dot{x}_\epsilon\|\|\ddot{x}_\epsilon\|)
    - \gamma \|\dot{x}_\epsilon\|^2 - (\gamma \beta+\epsilon) \langle \dot{x}_\epsilon , \ddot{x}_\epsilon \rangle
    \\[2mm]
    & - \epsilon \beta \|\ddot{x}_\epsilon\|^2 + 2\alpha\beta^2\langle \dot{x}_\epsilon,\ddot{x}_\epsilon\rangle \\[2mm]
    \leqslant & L\beta(\|\dot{x}_\epsilon\|^2 + \beta\|\dot{x}_\epsilon\|^2/2+\beta\|\ddot{x}_\epsilon\|^2/2) - \gamma\|\dot{x}_\epsilon\|^2 - \epsilon\beta\|\ddot{x}_\epsilon\|^2 \\[2mm]
    & + \underbrace{(2\alpha\beta^2-\gamma\beta-\epsilon)}_{=~0}\langle \dot{x}_\epsilon,\ddot{x}_\epsilon\rangle \\
    = & -a\|\dot{x}_\epsilon\|^2-b\|\ddot{x}_\epsilon\|^2
\end{align*}
where $a:= \gamma-L\beta(1+\beta/2)>0$ and $b = \beta(\epsilon-L\beta/2)>0$ by taking $\beta < \min \{\sqrt{1+2\gamma/L}-1 , 2\epsilon/L\}$. The zero term is obtained by taking $\alpha = (\gamma\beta+\epsilon)/(2\beta^2)$, which is greater than or equal to $1/4$. Indeed, $\beta \leqslant \sqrt{1+2\gamma/L}-1 \leqslant \sqrt{1+2\gamma}-1 \leqslant 2\gamma$ since $L\geqslant 1$. It then suffices to see that 
\begin{equation*}
    \alpha\geqslant \frac{1}{4} \iff \frac{\epsilon+\gamma\beta}{2\beta^2}\geqslant \frac{1}{4} \iff (\beta-\gamma)^2 \leqslant 2\epsilon+\gamma^2 \impliedby \beta \in (0,2\gamma].
\end{equation*}
Furthermore, we have
\begin{align*}
    \|\nabla H_\alpha(u_\epsilon,x_\epsilon)\| & \leqslant \|\nabla f(u_\epsilon)\|+2\alpha\|u_\epsilon-x_\epsilon\| \\
    &\leqslant \|\nabla f(u_\epsilon)- \nabla f(x_\epsilon)\|+\|\nabla f(x_\epsilon)\|+2\alpha\beta^2\|\dot{x}_\epsilon\| \\
    &\leqslant L\beta\|\dot{x}_\epsilon\| + \|\epsilon\ddot{x}_\epsilon+\gamma\dot{x}_\epsilon\|+2\alpha\beta^2\|\dot{x}_\epsilon\| \\
    &\leqslant c\|\dot{x}_\epsilon\| +  \epsilon\|\ddot{x}_\epsilon\|
\end{align*}
where $c:= L\beta+\gamma+\epsilon+\gamma\beta>0$.

We say that $f$ attains a critical value $v \in \mathbb{R}$ in a set $S \subset \mathbb{R}^n$ if there exists $x\in S$ such that $f(x) = v$ and $\nabla f(x) = 0$. By the definable Morse-Sard theorem \cite[Corollary 9]{bolte2007clarke}, $f$ has finitely many critical values in $\mathbb{R}^n$. Let $m \in \mathbb{N}\setminus \{0\}$ be an upper bound on the number of critical values of $f$ in $\overline{X}$, i.e., the closure of $X$. Since $\nabla H_\alpha(x,y) = (\nabla f(x)+2\alpha(x-y) , 2\alpha(y-x))^\top$, the critical values of $f$ in $\overline{X}$ are the same as those of $H_\alpha$ in $\overline{B(X,r)\times X} = B(\overline{X},r)\times \overline{X}$. We let $V$ denote this set of critical values if they exist, otherwise let $V:=\{0\}$. Also, let $d(x,S) := \inf \{ \|x-y\| : y \in S\}$ be the distance of point $x\in \mathbb{R}^n$ to a set $S \subset \mathbb{R}^n$. This enables us to define the function $\widetilde{H}_\alpha(x) := d(H_\alpha(x),V)$ for all $x\in\mathbb{R}^n$. 

Since $\alpha \geqslant 1/4$, by \cite[Proposition 3]{josz2023convergence} there exists a concave definable diffeomorphism $\psi:\mathbb{R}_+ \rightarrow \mathbb{R}_+$ such that $\| \nabla (\psi \circ \widetilde{H}_\alpha)(u_\epsilon,x_\epsilon)\| \geqslant 1$ whenever $0\notin \partial\widetilde{H}_\alpha(u_\epsilon,x_\epsilon)$. In other words, $\psi'(\widetilde{H}_\alpha(u_\epsilon,x_\epsilon)) \geqslant 1/\|\nabla \widetilde{H}_\alpha(u_\epsilon,x_\epsilon)\| = 1/\|\nabla H_\alpha(u_\epsilon,x_\epsilon)\|$. Since $H_\alpha(u_\epsilon,x_\epsilon)$ is decreasing, $\widetilde{H}_\alpha(u_\epsilon,x_\epsilon)$ is either decreasing or increasing. If it is decreasing, then 
\begin{align*}
    \frac{d}{dt} (\psi \circ \widetilde{H}_\alpha)(u_\epsilon,x_\epsilon) & = \psi' (\widetilde{H}_\alpha(u_\epsilon,x_\epsilon)) \frac{d}{dt}\widetilde{H}_\alpha(u_\epsilon,x_\epsilon) \\
    & = \psi' (\widetilde{H}_\alpha(u_\epsilon,x_\epsilon)) \frac{d}{dt}H_\alpha(u_\epsilon,x_\epsilon) \\
    & \leqslant -\frac{a\|\dot{x}_\epsilon\|^2 + b\|\ddot{x}_\epsilon\|^2}{c\|\dot{x}_\epsilon\| +  \epsilon\|\ddot{x}_\epsilon\|}.
\end{align*}
If it is increasing, then
\begin{align*}
    \frac{d}{dt} (\psi \circ \widetilde{H}_\alpha)(u_\epsilon,x_\epsilon) & = \psi' (\widetilde{H}_\alpha(u_\epsilon,x_\epsilon)) \frac{d}{dt}\widetilde{H}_\alpha(u_\epsilon,x_\epsilon) \\
    & = -\psi' (\widetilde{H}_\alpha(u_\epsilon,x_\epsilon)) \frac{d}{dt}H_\alpha(u_\epsilon,x_\epsilon) \\
    & \geqslant \frac{a\|\dot{x}_\epsilon\|^2 + b\|\ddot{x}_\epsilon\|^2}{c\|\dot{x}_\epsilon\| +  \epsilon\|\ddot{x}_\epsilon\|}.
\end{align*}
Together, this yields
\begin{align*}
    \|\dot{x}_\epsilon\| & \leqslant c^{-1} (\|c \dot{x}_\epsilon\| + \|\epsilon \ddot{x}_\epsilon\|) \\
    & \leqslant \frac{2}{c} \frac{\|c \dot{x}_\epsilon\|^2 + \|\epsilon \ddot{x}_\epsilon\|^2}{\|c \dot{x}_\epsilon\| + \|\epsilon \ddot{x}_\epsilon\|} \\
    & = \frac{2}{c} \left( \frac{c^2}{a} \frac{a\|\dot{x}_\epsilon\|^2}{\|c \dot{x}_\epsilon\| + \|\epsilon \ddot{x}_\epsilon\|} + \frac{\epsilon^2}{b} \frac{b\|\ddot{x}_\epsilon\|^2}{\|c \dot{x}_\epsilon\| + \|\epsilon \ddot{x}_\epsilon\|} \right) \\
    & \leqslant \frac{2}{c} \left( \frac{c^2}{a} + \frac{\epsilon^2}{b} \right) \frac{a\|\dot{x}_\epsilon\|^2+b\|\ddot{x}_\epsilon\|^2}{\|c \dot{x}_\epsilon\| + \|\epsilon \ddot{x}_\epsilon\|} \\
    & \leqslant 2\left( \frac{c}{a} + \frac{\epsilon^2}{bc} \right) \left| \frac{d}{dt}(\psi \circ \widetilde{H}_\alpha)(u_\epsilon,x_\epsilon) \right|.
\end{align*}

Consider the times $t \in \{t_1,\hdots,t_k\}$ where $0 < t_1 < \cdots < t_k < T$ and potentially $t\in (t_k,T)$ such that $0\in\partial \widetilde{H}_\alpha(u_\epsilon(t),x_\epsilon(t))$. 
For notational convenience, let $t_0 := 0$ and $t_{k+1}:=T$ where $k$ is possibly equal to zero (in which case $0\notin\partial \widetilde{H}_\alpha(u_\epsilon(t),x_\epsilon(t))$ for all $t \in (0,T)$). We have
\begin{subequations}
\begin{align}
    & \int_0^T \|\dot{x}_\epsilon(t)\|dt \\
    = & \sum_{i=0}^k \int_{t_i}^{t_{i+1}}\|\dot{x}_\epsilon(t)\|dt \label{eq:length_a}\\
    \leqslant & 2\left( \frac{c}{a} + \frac{\epsilon^2}{bc} \right) \sum_{i=0}^k \int_{t_i}^{t_{i+1}} \left| \frac{d}{dt}(\psi \circ \widetilde{H}_\alpha)(u_\epsilon(t),x_\epsilon(t)) \right|dt \label{eq:length_b}\\
     = & 2\left( \frac{c}{a} + \frac{\epsilon^2}{bc} \right) \sum_{i=0}^k \left| (\psi \circ \widetilde{H}_\alpha)(u_\epsilon(t_{i+1}),x_\epsilon(t_{i+1})) - (\psi \circ \widetilde{H}_\alpha)(u_\epsilon(t_i),x_\epsilon(t_i)) \right| \label{eq:length_c}\\
    \leqslant & 2\left( \frac{c}{a} + \frac{\epsilon^2}{bc} \right) \sum_{i=0}^k \psi \left( \left| \widetilde{H}_\alpha(u_\epsilon(t_{i+1}),x_\epsilon(t_{i+1})) - \widetilde{H}_\alpha(u_\epsilon(t_i),x_\epsilon(t_i)) \right|\right) \label{eq:length_d}\\
    = & 2\left( \frac{c}{a} + \frac{\epsilon^2}{bc} \right) \sum_{i=0}^k \psi \left( H_\alpha(u_\epsilon(t_i),x_\epsilon(t_i)) - H_\alpha(u_\epsilon(t_{i+1}),x_\epsilon(t_{i+1}))\right) \label{eq:length_e}\\
    \leqslant & 2\left( \frac{c}{a} + \frac{\epsilon^2}{bc} \right) (k+1) ~ \psi \left( \frac{1}{k+1} \sum_{i=0}^k H_\alpha(u_\epsilon(t_i),x_\epsilon(t_i)) - H_\alpha(u_\epsilon(t_{i+1}),x_\epsilon(t_{i+1}))\right) \label{eq:length_f}\\
     = & 2\left( \frac{c}{a} + \frac{\epsilon^2}{bc} \right) (k+1) ~ \psi \left(  \frac{H_\alpha(u_\epsilon(0),x_\epsilon(0)) - H_\alpha(u_\epsilon(T),x_\epsilon(T))}{k+1}\right) \label{eq:length_g}\\
    \leqslant & 4m \left( \frac{c}{a} + \frac{\epsilon^2}{bc} \right) ~ \psi \left(  \frac{H_\alpha(u_\epsilon(0),x_\epsilon(0)) - H_\alpha(u_\epsilon(T),x_\epsilon(T))}{2m}\right). \label{eq:length_h}
\end{align}
\end{subequations}
Indeed, \eqref{eq:length_c} is due to the fact that $\frac{d}{dt}(\psi \circ \widetilde{H}_\alpha)(u_\epsilon(t),x_\epsilon(t))>0$ for all $t \in (t_i,t_{i+1})$ or $\frac{d}{dt}(\psi \circ \widetilde{H}_\alpha)(u_\epsilon(t),x_\epsilon(t))<0$ for all $t \in (t_i,t_{i+1})$. \eqref{eq:length_d} holds because $\psi$ is concave and $\psi(0) = 0$. In particular, if $0\leqslant \lambda \leqslant \mu$, then $\psi(\mu) - \psi(\lambda) \leqslant \psi(\mu-\lambda)-\psi(0)$. \eqref{eq:length_e} holds because $0\notin\partial \widetilde{H}_\alpha(u_\epsilon(t),x_\epsilon(t))$ for all $t\in (t_i,t_{i+1})$. \eqref{eq:length_f} is due to Jensen's inequality. \eqref{eq:length_g} is the result of a telescoping sum. Finally, \eqref{eq:length_h} holds because $k \leqslant 2m-1$ and $\psi$ is concave and $\psi(0) = 0$. In particular, if $0 \leqslant \kappa$ and $0\leqslant \lambda \leqslant \mu$, then $\lambda \psi(\kappa/\lambda) \leqslant \mu \psi (\kappa/\mu)$. 

We next bound the argument of $\psi$ in \eqref{eq:length_h}. Let $\|\cdot\|_*$ denote the dual norm of $\|\cdot\|$. A Taylor bound yields
\begin{align*}
    & H_\alpha(u_\epsilon(0),x_\epsilon(0)) - H_\alpha(u_\epsilon(T),x_\epsilon(T)) \\[2mm]
    = & f(x_\epsilon(0)+\beta\dot{x}_\epsilon(0)) - f(x_\epsilon(T)+\beta\dot{x}_\epsilon(T)) + \alpha \beta^2 (\|\dot{x}_\epsilon(0)\|^2-\|\dot{x}_\epsilon(T)\|^2) \\
    \leqslant & f(x_\epsilon(0)) + \langle \nabla f(x_\epsilon(0)), \beta\dot{x}_\epsilon(0)\rangle + \frac{L}{2}\|\beta\dot{x}_\epsilon(0)\|^2 - f(x_\epsilon(T)) \\
    &- \langle \nabla f(x_\epsilon(T)), \beta\dot{x}_\epsilon(T)\rangle  + \frac{L}{2}\|\beta\dot{x}_\epsilon(T)\|^2 + \alpha \beta^2 \|\dot{x}_\epsilon(0)\|^2 \\ 
    \leqslant & f(x_\epsilon(0)) + \beta \| \nabla f(x_\epsilon(0)) \|_* \|\dot{x}_\epsilon(0)\| + \frac{L\beta^2}{2}\|\dot{x}_\epsilon(0)\|^2 - f(x_\epsilon(T)) \\
    & +\beta \|\nabla f(x_\epsilon(T))\|_*  \|\dot{x}_\epsilon(T)\| + \frac{L\beta^2}{2}\|\dot{x}_\epsilon(T)\|^2 + \alpha\beta^2 \|\dot{x}_\epsilon(0)\|^2 \\
    = & f(x_\epsilon(0)) - f(x_\epsilon(T)) + 2 \beta L r + L\beta^2 r^2 + \alpha \beta^2 r^2 \\[2mm]
    = & f(x_\epsilon(0)) - f(x_\epsilon(T)) + 2 \beta L r + L\beta^2 r^2 + (\beta\gamma+\epsilon)r^2/2 \\[2mm]
    \leqslant & f(x_\epsilon(0)) - f(x_\epsilon(T)) + 2 \epsilon r + \epsilon r^2 + (\epsilon+\epsilon) r^2/2\\[2mm]
    = & f(x_\epsilon(0)) - f(x_\epsilon(T)) + 2r(r+1)\epsilon
\end{align*}
by further imposing that $\beta \leqslant \epsilon/((1+\gamma)L)$. Indeed, $\beta L \leqslant \epsilon/(1+\gamma) \leqslant \epsilon$, $L\beta^2 \leqslant \epsilon \beta \leqslant \epsilon$ (since $\epsilon \leqslant \bar{\epsilon} \leqslant L$), and $\beta \gamma \leqslant \beta (1+\gamma) \leqslant \epsilon/L \leqslant \epsilon$ (since $1\leqslant L$). 

Finally, we seek to find a bound on the coefficient $c/a + \epsilon^2/(bc)$ in front of $\psi $ in \eqref{eq:length_h}. We would like it to be independent of $\epsilon$, otherwise the coefficient could blow up as $\epsilon$ approaches zero. By recalling that $\beta \leqslant \epsilon/((1+\gamma)L) \leqslant \epsilon/L$ and further imposing that $\beta \leqslant \sqrt{1+\gamma/L}-1$, we find that
\begin{align*}
    \frac{c}{a} &= \frac{L\beta+\gamma+\epsilon+\gamma\beta}{\gamma-L\beta(1+\beta/2)} \leqslant \frac{\gamma+2\epsilon+\gamma\epsilon/L}{\gamma-L\beta(1+\beta/2)} \leqslant \frac{\gamma+2\epsilon+\gamma\epsilon/L}{\gamma(1-1/2)} \\
    &= \frac{2\gamma L+ 4L\epsilon+2\gamma\epsilon}{\gamma L} \leqslant 4\left(1+\frac{L}{\gamma}\right). 
\end{align*}

In order to prevent the second term from blowing up, it will be convenient to choose $\beta$ as large as possible, namely by setting $\beta = \min \{ \epsilon/((1+\gamma)L) ,  \sqrt{1+\gamma/L}-1 \}$. This yields
\begin{subequations}
    \begin{align}
    \frac{\epsilon^2}{bc} & = \frac{\epsilon^2}{\beta(\epsilon-L\beta/2)(L\beta+\gamma+\epsilon+\gamma\beta)} \label{eq:second_a} \\
    & \leqslant \frac{2\epsilon}{\beta(L\beta+\gamma+\epsilon+\gamma\beta)} \label{eq:second_b}\\
    & \leqslant \max \left\{ \frac{2\epsilon}{\frac{\epsilon}{(1+\gamma)L}\left(L\frac{\epsilon}{(1+\gamma)L}+\gamma+\epsilon+\gamma\frac{\epsilon}{(1+\gamma)L}\right)} , \frac{2\epsilon}{\frac{\gamma}{3L}\left(L\frac{\gamma}{3L}+\gamma+\epsilon+\gamma\frac{\gamma}{3L}\right)} \right\} \label{eq:second_c} \\
    & \leqslant \max \left\{ \frac{2(1+\gamma)L}{\gamma} , \frac{6L}{\gamma} \right\} \label{eq:second_d} \\
    & \leqslant 2L+\frac{6L}{\gamma} \label{eq:second_e}
    \end{align}
\end{subequations}
Indeed, \eqref{eq:second_a} follows from the definitions of $a$ and $b$. \eqref{eq:second_b} is due to $\beta \leqslant \epsilon/L$. The first argument of the maximum in \eqref{eq:second_c} corresponds to the case where $\beta = \epsilon/((1+\gamma)L)$, while the second argument of the maximum corresponds to the case where $\beta = \sqrt{1+\gamma/L}-1$, in which case $\sqrt{1+\gamma/L}-1 \leqslant \epsilon/((1+\gamma)L) \leqslant \epsilon/L$. This implies that $1+\gamma/L \leqslant (1+\epsilon/L)^2$ and $\gamma/L \leqslant (\epsilon/L)^2 + 2\epsilon/L \leqslant 3$. Since $t \in \mathbb{R} \mapsto \sqrt{1+t}-1$ is concave, we find that $\beta \geqslant \gamma/(3L)$. In \eqref{eq:second_d} we discard all but one term in the sum in the denominators. \eqref{eq:second_e} provides a simpler bound devoid of a maximum. 

We conclude that \eqref{eq:length_formula} holds with
\begin{equation}
\label{eq:eta_varphi}
    \eta := 2r(r+1) ~~~\text{and}~~~ \varphi(t) := 8(2 + L + 5 L/\gamma)m ~ \psi\left(\frac{t}{2m}\right).
\end{equation}
\end{proof}

\section{Proof of Theorem \ref{thm:bounded}}
\label{sec:Proof of Theorem thm:bounded}

($\Longrightarrow$) Let $x_0 \in \mathbb{R}^n$ and $x$ be a global solution to \eqref{eq:P_0}. Let $T \geqslant 0$. By Lemma \ref{lemma:tracking}, there exists a sequence $\epsilon_k \searrow 0$ and global solutions $x_{\epsilon_k}$ to \eqref{eq:P_eps} initialized at $\{x_0\} \times \{0\}$ such that $\|x_{\epsilon_k}(t)-x(t)\| \leqslant 1/(k+1)$ for all $t\in[0,T]$. Since there exists a compact set $X \subset \mathbb{R}^n$ such that $x_{\epsilon_k}(t) \in X$ for all $t \geqslant 0$ and $k\in \mathbb{N}$, taking the limit yields that $x(t) \in X$ for all $t\in [0,T]$. As $T$ is arbitrary, we conclude that $x(t) \in X$ for all $t\geqslant 0$.

($\Longleftarrow$) We assume that for all $x_0 \in \mathbb{R}^n$, the global solution to \eqref{eq:P_0} is bounded. We seek to show that for all $(x_0,\dot{x}_0) \in \mathbb{R}^n \times \mathbb{R}^n$, the global solution to \eqref{eq:P_eps} is uniformly bounded for all sufficiently small $\epsilon>0$. We will actually show something slightly stronger. Following the recent work \cite{josz2023global} on gradient dynamics, we will show by induction that the length is uniformly bounded. Let $X_0$ be a nonempty compact subset of $\mathbb{R}^n$ and let $r_0 \geqslant 0$. We will prove that there exists $\overline{\epsilon}>0$ such that $\sigma(X_0,r_0,\bar{\epsilon})<\infty$ where
\begin{subequations}
\label{eq:sup_continuous_X_0}
\begin{align}
    \sigma(X_0,r_0,\bar{\epsilon}) := & \sup\limits_{\begin{array}{c}\text{\footnotesize{$x_\epsilon\in C^2(\mathbb{R}_+,\mathbb{R}^n)$}} \\ \text{\footnotesize{$\epsilon \in (0,\overline{\epsilon}]$}} \end{array}} ~~ \int_0^\infty \|\dot{x}_\epsilon(t)\|dt \\
   & ~~ \mathrm{s.t.} ~~
\left\{ 
\begin{array}{l}
\epsilon\ddot{x}_\epsilon(t) + \gamma\dot{x}_\epsilon(t) + \nabla f(x_\epsilon(t)) = 0, ~ \forall t\geqslant 0,\\[2mm] x_\epsilon(0) \in X_0,\quad \dot{x}_\epsilon(0) \in B(0,r_0).
\end{array}
\right.
\end{align}
\end{subequations}

Let $\Phi:\mathbb{R}_+\times\mathbb{R}^{n}\rightarrow\mathbb{R}^n$ be the gradient flow of $f$ defined for all $(t,x_0)\in \mathbb{R}_+\times\mathbb{R}^{n}$ by $\Phi(t,x_0)=x(t)$ where $x$ is the solution to \eqref{eq:P_0}. Let $\Phi_0:=\Phi(\mathbb{R}_+,X_0)$ and let $C$ be the set of critical points of $f$ in $\overline{\Phi}_0$. Note that $C$ is compact by \cite[Lemma 1]{josz2023global} and \cite[2.1.5 Proposition p. 29]{clarke1990}. Thus there exists $\xi>0$ such that either $X_0\subset C$ or $X_0\setminus\mathring{B}(C,\xi/4)\neq\emptyset$ where $\mathring{B}(C,\xi/4):=C+\mathring{B}(0,\xi/4)$. 

By Lemma \ref{lemma:speed}, there exists $r_1>0$ such that for all $\epsilon,T>0$, if $x_\epsilon:[0,T]\rightarrow B(\overline{\Phi}_0,\xi)$ is a solution to \eqref{eq:P_eps} such that $\|x_\epsilon(0)\| \leqslant r_0$, then $\|\dot{x}_\epsilon(t)\| \leqslant r_1$ for all $t\in [0,T]$. By Lemma \ref{lemma:length}, there exist $\eta>0$ and a diffeomorphism $\varphi:\mathbb{R}_+\rightarrow\mathbb{R}_+$ such that for all $\epsilon\in(0,1]$   and all $T\geqslant 0$, if $x_\epsilon:[0,T]\rightarrow B(\overline{\Phi}_0,\xi)$ is a solution to \eqref{eq:P_eps} such that $\|x_\epsilon(t)\| \leqslant r_1$ for all $t\in [0,T]$, then
\begin{equation}\label{eq:HBFfinitelength-basic}
    \int_0^T\|\dot{x}_\epsilon(t)\|dt \leqslant \varphi\left(f(x_\epsilon(0))-f(x_\epsilon(T))+\eta\epsilon\right). 
\end{equation}
Since $f$ is continuous, there exists $\delta\in(0,\xi/2)$ such that 
\begin{equation}\label{eq:HBFcltocritval}
    f(x) - \max_C f \leqslant \frac{1}{4}\varphi^{-1}\left(\frac{\xi}{2}\right), \quad \forall x\in B(C,\delta).
\end{equation}

We next show that there exists $\epsilon_0>0$ such that for all $\epsilon\in(0,\epsilon_0]$, there exists $t^*\geqslant 0$ such that $x_\epsilon(t^*)\in B(C,\delta)$. If $X_0\subset C$, then this is guaranteed by taking $\epsilon_0 = 1$ and $t^*=0$. If $X_0\setminus\mathring{B}(C,\xi/4)\neq\emptyset$, then  $\overline{\Phi}_0\setminus\mathring{B}(C,\delta/2)$ is nonempty since it contains $X_0\setminus\mathring{B}(C,\xi/4)$. Hence $\|\nabla f\|/\gamma$ attains its infimum $\nu$ on the compact set $\overline{\Phi}_0\setminus\mathring{B}(C,\delta/2)$. It must be that $\nu>0$ because $\overline{\Phi}_0\setminus\mathring{B}(C,\delta/2)$ is devoid of critical points of $f$. It thus makes sense to define $T:=2\sigma(X_0)/\nu$ where 
\begin{subequations}
\label{eq:gradient_flow}
\begin{align}
    \sigma(X_0) := & \sup\limits_{\text{\footnotesize $x \in C^1(\mathbb{R}_+,\mathbb{R}^n)$}} ~~ \int_0^\infty \|\dot{x}(t)\|dt \\
   & ~~ \mathrm{s.t.} ~~
\left\{ 
\begin{array}{l}
\gamma\dot{x}(t) + \nabla f(x(t)) = 0, ~ \forall t\geqslant 0,\\[2mm] x(0) \in X_0,
\end{array}
\right.
\end{align}
\end{subequations}
is finite by \cite[Lemma 1]{josz2023global}. Since $X_0\not\subset C$, it holds that $\sigma(X_0)>0$ and $T>0$. By Lemma \ref{lemma:tracking}, there exists $\epsilon_0\in (0,\min\{1,\varphi^{-1}(\xi/2)/(4\eta)\}]$ such that for all $\epsilon\in (0,\epsilon_0]$ and for any feasible point $(x_\epsilon,\epsilon)$ of \eqref{eq:sup_continuous_X_0}, there exists a feasible point $x$ of \eqref{eq:gradient_flow} such that $\|x_\epsilon(t) - x(t)\| \leqslant \delta/2$ for all $t\in[0,T]$. For any such $x$ there exists $t^*\in(0,T)$ such that $\|\dot{x}(t^*)\|<\nu$, otherwise $\sigma(X_0) < T(2\sigma(X_0))/T = T\nu\leqslant \int_0^{T}\|\dot{x}(t)\|dt \leqslant \sigma(X_0)$. Thus $\|\nabla f(x(t^*))\|/\gamma=\|\dot{x}(t^*)\|<\nu$. Since $x(t^*)\in\overline{\Phi}_0$, by definition of $\nu$, it follows that $x(t^*)\in \mathring{B}(C,\delta/2)$. Hence there exists $x^*\in C$ such that $\|x(t^*)-x^*\|\leqslant\delta/2$ and $\|x_\epsilon(t^*)-x^*\|\leqslant \|x_\epsilon(t^*)-x(t^*)\|+\|x(t^*)-x^*\| \leqslant \delta/2+\delta/2=\delta$. In other words, $x_\epsilon(t^*) \in B(C,\delta)$.

Fix $\epsilon \in (0,\epsilon_0]$ and let $(x_\epsilon,\epsilon)$ be a feasible point of \eqref{eq:sup_continuous_X_0}. By the previous paragraph, there exists $t^* \geqslant 0$ such that $x_\epsilon(t^*) \in B(C,\delta)$. Let $T^*=\inf\{t\geqslant t^*:x_\epsilon(t)\not\in\mathring{B}(C,\xi)\}$. If $T^* = \infty$, then $x_\epsilon(t) \in \mathring{B}(C,\xi)$ for all $t \geqslant t^*$. Since $x_\epsilon(t) \in B(\Phi_0,\delta/2)$ for all $t\in [0,t^*]$ and $\delta < \xi/2$, it follows that $x_\epsilon(t)\in B(\overline{\Phi}_0,\xi)$ for all $t\geqslant 0$.
The length formula \eqref{eq:HBFfinitelength-basic} then yields
 \begin{equation*}
    \int_{0}^\infty \|\dot{x}_\epsilon(t)\|dt \leqslant \varphi\left(\sup_{X_0} f -\min_{B(\overline{\Phi}_0,\xi)} f+\eta\epsilon\right). 
\end{equation*}
If $T^* < \infty$, then let $x^*\in C$ be such that $x_\epsilon(t^*) \in B(x^*,\delta)$ and observe that
\begin{equation*}
    \int_{t^*}^{T^*} \|x'_\epsilon(t)\|dt \geqslant \|x_\epsilon(T^*) - x_\epsilon(t^*)\| \geqslant \|x_\epsilon(T^*)-x^*\|-\|x_\epsilon(t^*)-x^*\| \geqslant \xi -\delta \geqslant \frac{\xi}{2}.
\end{equation*}
By the length formula \eqref{eq:HBFfinitelength-basic}, we have
\begin{equation*}
    \frac{\xi}{2} \leqslant \int_{t^*}^{T^*} \|x'_\epsilon(t)\|dt \leqslant \varphi\left(f(x_\epsilon(t^*)) - f(x_\epsilon(T^*)) +\eta\epsilon\right)
\end{equation*}
Composing by $\varphi^{-1}$, we find that $\varphi^{-1}(\xi/2) \leqslant f(x_\epsilon(t^*)) - f(x_\epsilon(T^*)) +\eta\epsilon$. It follows that 
\begin{align*}
    f(x_\epsilon(T^*)) & \leqslant f(x_\epsilon(t^*)) - \varphi^{-1}(\xi/2) +\eta\epsilon \\[2mm]
    &\leqslant \max_C f + \varphi^{-1}(\xi/2)/4 - \varphi^{-1}(\xi/2) + \varphi^{-1}(\xi/2)/4 \\
    & = \max_C f - \varphi^{-1}(\xi/2)/2
\end{align*}
where we use the bound in \eqref{eq:HBFcltocritval} and the fact that $\epsilon \leqslant \epsilon_0 \leqslant \varphi^{-1}(\xi/2)/(4\eta)$. In other words, $x_\epsilon(T^*)$ belongs to the set 
\begin{equation*}
    X_1 := \left\{x\in B(C,\xi):f(x)\leqslant \max_{C} f-\frac{1}{2}\varphi^{-1}\left(\frac{\xi}{2}\right)\right\}.
\end{equation*}
Since $(x_\epsilon(t),\dot{x}_\epsilon(t))\in B(\overline{\Phi}_0,\xi)\times B(0,r_1)$ for all $t\in[0,T^*]$, by definition of $\sigma$ in \eqref{eq:sup_continuous_X_0} we have
\begin{align*}
    \int_0^\infty\|\dot{x}_\epsilon(t)\|dt &= \int_0^{T^*}\|\dot{x}_\epsilon(t)\|dt + \int_{T^*}^\infty\|\dot{x}_\epsilon(t)\|dt \\
    &\leqslant \varphi\left(f(x_\epsilon(0))-f(x_\epsilon(T^*))+\eta\epsilon\right) + \sigma(X_1,r_1,\epsilon)
\end{align*}
Combining the cases when $T^*<\infty$ and $T^*=\infty$, one further concludes that 
\begin{equation*}
    \sigma(X_0,r_0,\epsilon) \leqslant \varphi\left(\sup\limits_{X_0} f-\inf\limits_{B(\overline{\Phi}_0,\xi)}f+\eta\epsilon\right)+\max\{\sigma(X_1,r_1,\epsilon),0\}
\end{equation*}
for all $\epsilon\in(0,\epsilon_0]$.

It now suffices to treat $(X_1,r_1)$ as the new initial conditions and reason by induction. For notational convenience, let $\varphi_0 := \varphi$, $\xi_0 := \xi$, and $\eta_0 := \eta$. Suppose that at iteration $k\in\mathbb{N}$ we obtain 
\begin{equation}\label{eq:HBFrecursivek}
    \sigma(X_{k},r_{k},\epsilon) \leqslant \varphi_{k}\left(\sup\limits_{X_{k}} f-\inf\limits_{B(\overline{\Phi}_{k},\xi_{k})}f+\eta_{k}\epsilon\right)+\max\{\sigma(X_{k+1},r_{k+1},\epsilon),0\}
\end{equation}
for all $\epsilon\in(0,\epsilon_{k}]$. Since 
\begin{equation*}
    f(\Phi(t,x_{k+1})) \leqslant f(\Phi(0,x_{k})) \leqslant \max_{C_{k}} f-\frac{1}{2}\varphi_{k}^{-1}\left(\frac{\xi_{k}}{2}\right) < \max_{C_{k}} f
\end{equation*}
for all $x_{k+1}\in X_{k+1}$ and $t\geqslant 0$, the maximal critical value of $f$ in $\overline{\Phi}_{k+1}$ is less than the maximal critical value of $f$ in $\overline{\Phi}_{k}$. By the definable Morse-Sard theorem \cite[Corollary 9]{bolte2007clarke}, $f$ has finitely many critical values. Hence there exists $K\geqslant 1$ such that $X_K=\emptyset$ and $\sigma(X_K,r_K,\epsilon_K)=-\infty$ by convention. Let $\bar{\epsilon}:=\min\{\epsilon_0,\ldots,\epsilon_{K-1}\}>0$. Then \eqref{eq:HBFrecursivek} holds for $k=1,\ldots,K$ where $\epsilon:=\bar{\epsilon}$. We conclude that 
\begin{equation*}
    \sigma(X_0,r_0,\bar{\epsilon}) \leqslant \sum_{k=0}^{K-1}\varphi_{k}\left(\sup\limits_{X_k} f-\inf\limits_{B(\overline{\Phi}_{k},\xi_{k})}f+\eta_{k}\bar{\epsilon}\right) < \infty.
\end{equation*}

\section{An application of Theorem \ref{thm:bounded}}
\label{sec:application}

From the example in Section \ref{sec:Example}, we already know that the global solution of \eqref{eq:P_eps} may not converge uniformly over $\mathbb{R}_+$ to the global solution of \eqref{eq:P_0} with the same initial point. However, with the slightly stronger version of Theorem \ref{thm:bounded} (see the comments right above \eqref{eq:sup_continuous_X_0}, which allows the boundedness of solution to be uniform over any compact set of initial points), we are able to deduce that the global solution of \eqref{eq:P_eps} converges uniformly over $[t_0,\infty)$ to some global solution of \eqref{eq:P_0} with  a possibly different initial point, by evoking the geometric singular perturbation theory (GSP) \cite{fenichel1979geometric,arnold1995geometric,kuehn2015multiple}. 

For all $\epsilon>0$, let $\Phi^\epsilon:\mathbb{R}_+\times\mathbb{R}^{n}\times\mathbb{R}^n\rightarrow\mathbb{R}^n$ be defined for all $(t,x_0,\dot{x}_0)\in \mathbb{R}_+\times\mathbb{R}^{n}\times\mathbb{R}^n$ by $\Phi^\epsilon(t,x_0,\dot{x}_0):=x_\epsilon(t)$ where $x_\epsilon$ is the global solution to \eqref{eq:P_eps} with initial point $(x_0,\dot{x}_0)$. Similarly, let $\Phi:\mathbb{R}_+\times\mathbb{R}^{n}\rightarrow\mathbb{R}^n$ be defined for all $(t,x_0)\in \mathbb{R}_+\times\mathbb{R}^{n}$ by $\Phi(t,x_0):=x(t)$ where $x$ is the global solution to \eqref{eq:P_0} with initial point $x_0$. We also denote $\dot{\Phi}^\epsilon(t,x_0,\dot{x}_0)=\dot{x}_\epsilon(t)$ and $\dot{\Phi}(t,x_0)=\dot{x}(t)$ accordingly. The uniform convergence result described in the above paragraph is given as follows. 
\begin{corollary}
    \label{cor:uniconv}
    Let $\gamma>0$ and $f:\mathbb{R}^n\rightarrow\mathbb{R}$ be a $C^{1,1}_\mathrm{loc}$ lower bounded function definable in an o-minimal structure on the real field. If for all $x_0 \in \mathbb{R}^n$, $\Phi(\cdot,x_0)$ is bounded, then for all $(x_0,\dot{x}_0) \in \mathbb{R}^n \times \mathbb{R}^n$ and all $t_0>0$, there exists $x_0'\in\mathbb{R}^n$ such that $\Phi^\epsilon(t,x_0,\dot{x}_0)\to \Phi(t,x_0')$ uniformly over $t\in[t_0,\infty)$ as $\epsilon\searrow 0$. 
\end{corollary}
\begin{proof}
    Fix any $(x_0,\dot{x}_0)\in \mathbb{R}^n \times \mathbb{R}^n$. By Theorem \ref{thm:bounded}, there exists $\epsilon_0>0$ and $c_1>0$ such that $\|\Phi^\epsilon(t,x_0,\dot{x}_0)\| \leqslant c_1$ and $\|\dot{\Phi}^\epsilon(t,x_0,\dot{x}_0)\| \leqslant c_1$ for all $t\geqslant 0$ and all $\epsilon\in(0,\epsilon_0]$. By the slightly stronger version of Theorem \ref{thm:bounded}, there exists $\epsilon_1\in(0,\epsilon_0]$ and $c_2>0$ such that $\|\Phi^\epsilon(t,x_0',\dot{x}_0')\|\leqslant c_2$ and $\|\dot{\Phi}^\epsilon(t,x_0',\dot{x}_0')\|\leqslant c_2$ for all $t\geqslant 0$, $x_0',\dot{x}_0'\in B(0,c_1+1)$ and $\epsilon\in(0,\epsilon_1]$. Thus, we can choose a closed ball $K$ such that its interior $\text{int}\,K$ satisfies 
    \begin{equation*}
        \text{int}\,K \supseteq \{\gamma\Phi^\epsilon(t,x_0',\dot{x}_0')+\epsilon\dot{\Phi}^\epsilon(t,x_0',\dot{x}_0'):t\geqslant 0, x_0',\dot{x}_0'\in B(0,c_1+1),\epsilon\in(0,\epsilon_1]\}. 
    \end{equation*}
    Consider the system 
    \begin{equation}
    \label{eq:S_eps} \tag{$S_\epsilon$}
        \begin{cases}
            \epsilon\dot{x}_\epsilon(t) = -\gamma x_\epsilon(t) + y_\epsilon(t), \\
            \dot{y}_\epsilon(t) = -\nabla f(x_\epsilon(t)), 
        \end{cases}~~~ \forall t \geqslant 0. 
    \end{equation}
    Similar to $\Phi^\epsilon$, define $\Psi_x^\epsilon,\Psi_y^\epsilon:\mathbb{R}_+\times\mathbb{R}^n\times\mathbb{R}^n\rightarrow \mathbb{R}^n$ by $(\Psi_x^\epsilon(t,x_0',y_0'),\Psi_x^\epsilon(t,x_0',y_0')):=(x_\epsilon(t),y_\epsilon(t))$ as the solution to \eqref{eq:S_eps} with initial point $(x_0',y_0')$. Then 
    \begin{align*}
        \Psi_x^\epsilon(t,x_0,\gamma x_0+\epsilon\dot{x}_0) &= \Phi^\epsilon(t,x_0,\dot{x}_0), \\
        \Psi_y^\epsilon(t,x_0,\gamma x_0+\epsilon\dot{x}_0) &= \gamma\Phi^\epsilon(t,x_0,\dot{x}_0)+\epsilon\dot{\Phi}^\epsilon(t,x_0,\dot{x}_0). 
    \end{align*}
    Let $\widetilde{K}:=K+B(0,1)$, and apply GSP theory \cite[Theorem 2]{arnold1995geometric} to the compact manifold with boundary
    \begin{equation*}
        Z_0 = \{(x,y)\in\mathbb{R}^n\times \widetilde{K}:x=x^*(y)= y/\gamma\},
    \end{equation*}
    there exists $\epsilon_2\in(0,\epsilon_1]$ such that for $\epsilon\in(0,\epsilon_2]$, there exists a locally invariant manifold
    \begin{equation*}
        Z_\epsilon = \{(x,y)\in\mathbb{R}^n\times \widetilde{K}: x = \bar{x}(y,\epsilon) = x^*(y) + O(\epsilon)\}. 
    \end{equation*}
    Restricted to $Z_\epsilon$, \eqref{eq:S_eps} reduces to  
    \begin{equation}
    \label{eq:S_eps^0} \tag{$S_\epsilon^0$}
        \dot{y}_\epsilon^0 = -\nabla f(\bar{x}(y_\epsilon^0,\epsilon)) = -\nabla f(x^*(y_\epsilon^0)) + O(\epsilon). 
    \end{equation}
    
    Given that $Z_0$ is uniformly asymptotically stable, we know that $Z_\epsilon$ is locally asymptotically stable and hence any solution to \eqref{eq:S_eps} with initial condition close to $Z_0$ will converge to a solution of \eqref{eq:S_eps^0}. More precisely, by \cite[Corollary 1]{arnold1995geometric}, there exists $v\in \widetilde{K}$, $C,\alpha>0$ and $\epsilon_3\in(0,\epsilon_2]$ such that for all $\epsilon\in(0,\epsilon_3]$, 
    \begin{subequations}\label{eq:exp_decay_esti}
        \begin{align}
            \|\Psi^\epsilon_x(t,x_0,\gamma x_0+\epsilon\dot{x}_0)-\Psi^\epsilon_x(t,\bar{x}(v,\epsilon),v)\| &\leqslant Ce^{-\alpha t/\epsilon}, \\
            \|\Psi^\epsilon_y(t,x_0,\gamma x_0+\epsilon\dot{x}_0)-\Psi^\epsilon_y(t,\bar{x}(v,\epsilon),v)\| &\leqslant Ce^{-\alpha t/\epsilon}
        \end{align}
    \end{subequations}
    for all $t$ such that $\Psi^\epsilon_y(t,\bar{x}(v,\epsilon),v)\in \widetilde{K}$. Let $Z_\epsilon^K:=Z_\epsilon\cap (\mathbb{R}^n\times K)$.  Since $\gamma x_0\in\text{int}\,K$, there exists $\epsilon_4\in (0,\epsilon_3]$ such that $(x_0,\gamma x_0+\epsilon\dot{x}_0)\in W^s(Z_\epsilon^K)=\cup_{v\in K}W^s((\bar{x}(v,\epsilon),v))$ for all $\epsilon\in(0,\epsilon_4]$, where $W^s$ denotes the stable manifold defined in the GSP theory. This means \eqref{eq:exp_decay_esti} actually holds for some $v\in K$. 
    Thus, by continuity, there exists $t_1>0$ such that $\Psi^\epsilon_y(t,\bar{x}(v,\epsilon),v)\in \widetilde{K}$ for $t\in[0,t_1]$. 
    
    Next we show that actually $\Psi^\epsilon_y(t,\bar{x}(v,\epsilon),v)\in \widetilde{K}$ for all $t\geq 0$. We claim that $\|\underbrace{\Psi^\epsilon_x(t_1,\bar{x}(v,\epsilon),v)}_{=:x_0'}\|\leqslant c_1+1$ and $\|\underbrace{(\Psi^\epsilon_y(t_1,\bar{x}(v,\epsilon),v)-\gamma x_0')/\epsilon}_{=:\dot{x}_0'} \| \leqslant c_1+1$.  
    If the claim is true, then by definition of $K$, 
    \begin{align*}
        \Psi^\epsilon_y(t,x_0',\gamma x_0'+\epsilon\dot{x}_0')=\gamma\Phi^\epsilon(t,x_0',\dot{x}_0')+\epsilon\dot{\Phi}^\epsilon(t,x_0',\dot{x}_0') \in K\subseteq \widetilde{K},\quad \forall t\geqslant 0. 
    \end{align*}
    This further implies  
    \begin{equation*}
        \Psi^\epsilon_y(t+t_1,\bar{x}(v,\epsilon),v) = \Psi^\epsilon_y(t,\Psi^\epsilon_x(t_1,\bar{x}(v,\epsilon),v),\Psi^\epsilon_y(t_1,\bar{x}(v,\epsilon),v)) \in \widetilde{K},\quad \forall t\geqslant 0.  
    \end{equation*}
    Thus, we prove that $\Psi^\epsilon_y(t,\bar{x}(v,\epsilon),v)\in \widetilde{K}$ for all $t\geq 0$. To verify our claim, by \eqref{eq:exp_decay_esti}, we have
    \begin{equation*}
        \|x_0'\|\leqslant \|\Phi^\epsilon(t_1,x_0,\dot{x}_0)\| + \|\Phi^\epsilon(t_1,x_0,\dot{x}_0)-x_0'\| \leqslant c_1+Ce^{-\alpha t_1/\epsilon}. 
    \end{equation*}
    Then it is clear that there exists $\epsilon_5\in(0,\epsilon_4]$ such that $\|x_0'\|\leqslant c_1+1$ for all $\epsilon\in(0,\epsilon_5]$. In addition, consider
    \begin{align*}
        \|\dot{\Phi}^\epsilon(t_1,x_0,\dot{x}_0)-\dot{x}_0'\| &\leqslant \|\dot{\Phi}^\epsilon(t_1,x_0,\dot{x}_0)-(\Psi^\epsilon_y(t,x_0,\gamma x_0+\epsilon\dot{x}_0)-\gamma x_0')/\epsilon\| + C\epsilon^{-1}e^{-\alpha t_1/\epsilon} \\
        &\leqslant \|\dot{\Phi}^\epsilon(t_1,x_0,\dot{x}_0)-(\gamma\Phi^\epsilon(t_1,x_0,\dot{x}_0)+\epsilon\dot{\Phi}^\epsilon(t_1,x_0,\dot{x}_0)-\gamma x_0')/\epsilon\| + C\epsilon^{-1}e^{-\alpha t_1/\epsilon} \\
        &= \gamma\epsilon^{-1}\|\Phi^\epsilon(t_1,x_0,\dot{x}_0)-x_0'\|+ C\epsilon^{-1}e^{-\alpha t_1/\epsilon} \\
        &\leqslant (1+\gamma)C\epsilon^{-1}e^{-\alpha t_1/\epsilon}. 
    \end{align*}
    Note that $C\epsilon^{-1}e^{-\alpha t_1/\epsilon}\to 0$ as $\epsilon\to 0$. Thus, we can find $\epsilon_6\in(0,\epsilon_5]$ so that $\|\dot{\Phi}^\epsilon(t_1,x_0,\dot{x}_0)-\dot{x}_0'\|\leqslant 1$ for all $\epsilon\in(0,\epsilon_6]$. This proves $\|\dot{x}_0'\|\leqslant c_1+1$ because we know that $\|\dot{\Phi}^\epsilon(t_1,x_0,\dot{x}_0)\|\leqslant c_1$. 
    
    From the previous result, for any $\delta>0$, there exists $\epsilon_7\in(0,\epsilon_6]$ such that for all $\epsilon\in(0,\epsilon_7]$ and all $t\geqslant t_0$, 
    \begin{equation}\label{eq:S_eps_approx_S_eps^0}
        \|(\Phi^\epsilon(t,x_0,\dot{x}_0),\gamma\Phi^\epsilon (t,x_0,{\dot{x}}_0)+\epsilon\dot{\Phi}^\epsilon(t,x_0,\dot{x}_0))-(\Psi^\epsilon_x(t,\bar{x}(v,\epsilon),v),\Psi^\epsilon_y(t,\bar{x}(v,\epsilon),v))\|\leqslant \frac{\delta}{2}. 
    \end{equation}
    Fenichel's GSP theory tells us that on manifold $Z_\epsilon$, \eqref{eq:S_eps^0} is a regular perturbation of the degenerate system 
    \begin{equation}
    \label{eq:S_0^0} \tag{$S_0^0$}
        \dot{y} = -\nabla f(x^*(y)) = -\nabla f(y/\gamma). 
    \end{equation}
    By using a simple change of variable $y\leftarrow y/\gamma$, it is easy to see \eqref{eq:S_0^0} is equivalent to the gradient system in $(P_0)$. This means any solution to \eqref{eq:S_eps^0} converges uniformly to a solution to $(P_0)$. Thus, for any $\delta>0$, there exists $\epsilon_8\in(0,\epsilon_7]$ such that for all $\epsilon\in (0,\epsilon_8]$,   
    \begin{equation}\label{eq:S_eps^0_approx_S_0^0}
        \|(\Psi^\epsilon_x(t,\bar{x}(v,\epsilon),v),\Psi^\epsilon_y(t,\bar{x}(v,\epsilon),v))-(\Phi(t,v),\gamma \Phi(t,v))\|\leqslant \frac{\delta}{2}, \quad \forall t\geqslant 0. 
    \end{equation}
    Combining \eqref{eq:S_eps_approx_S_eps^0} and \eqref{eq:S_eps^0_approx_S_0^0} would yield the desired results. Obviously, the initial point $v$ of the limiting solution is likely different from the initially chosen $x_0$. 
\end{proof}

\section*{Acknowledgments}
We thank the reviewers and the associate editor for their valuable feedback.

\bibliographystyle{abbrv}    
\bibliography{JDE/JDDE/sn-bibliography}

\begin{thebibliography}{10}

\bibitem{alvarez2000minimizing}
F.~Alvarez.
\newblock On the minimizing property of a second order dissipative system in
  hilbert spaces.
\newblock {\em SIAM Journal on Control and Optimization}, 38(4):1102--1119,
  2000.

\bibitem{arnold1995geometric}
L.~Arnold, C.~K. Jones, K.~Mischaikow, G.~Raugel, and C.~K. Jones.
\newblock Geometric singular perturbation theory.
\newblock {\em Dynamical Systems: Lectures Given at the 2nd Session of the
  Centro Internazionale Matematico Estivo (CIME) held in Montecatini Terme,
  Italy, June 13--22, 1994}, pages 44--118, 1995.

\bibitem{attouch2000heavy}
H.~Attouch, X.~Goudou, and P.~Redont.
\newblock The heavy ball with friction method, i. the continuous dynamical
  system: global exploration of the local minima of a real-valued function by
  asymptotic analysis of a dissipative dynamical system.
\newblock {\em Communications in Contemporary Mathematics}, 2(01):1--34, 2000.

\bibitem{aubin1984differential}
J.-P. Aubin and A.~Cellina.
\newblock {\em Differential inclusions: set-valued maps and viability theory},
  volume 264.
\newblock Springer-Verlag, Berlin, 1984.

\bibitem{begout2015damped}
P.~B{\'e}gout, J.~Bolte, and M.~A. Jendoubi.
\newblock On damped second-order gradient systems.
\newblock {\em Journal of Differential Equations}, 259(7):3115--3143, 2015.

\bibitem{bolte2007clarke}
J.~Bolte, A.~Daniilidis, A.~Lewis, and M.~Shiota.
\newblock Clarke subgradients of stratifiable functions.
\newblock {\em SIAM Journal on Optimization}, 18(2):556--572, 2007.

\bibitem{bolte2010characterizations}
J.~Bolte, A.~Daniilidis, O.~Ley, and L.~Mazet.
\newblock Characterizations of {\l}ojasiewicz inequalities: subgradient flows,
  talweg, convexity.
\newblock {\em Transactions of the American Mathematical Society},
  362(6):3319--3363, 2010.

\bibitem{boct2020second}
R.~I. Bo{\c{t}}, E.~R. Csetnek, and S.~C. L{\'a}szl{\'o}.
\newblock A second-order dynamical approach with variable damping to nonconvex
  smooth minimization.
\newblock {\em Applicable Analysis}, 99(3):361--378, 2020.

\bibitem{clarke1990}
F.~H. Clarke.
\newblock {\em Optimization and Nonsmooth Analysis}.
\newblock SIAM Classics in Applied Mathematics, 1990.

\bibitem{coddington1955theory}
E.~A. Coddington and N.~Levinson.
\newblock {\em Theory of ordinary differential equations}.
\newblock Tata McGraw-Hill Education, 1955.

\bibitem{fenichel1979geometric}
N.~Fenichel.
\newblock Geometric singular perturbation theory for ordinary differential
  equations.
\newblock {\em Journal of differential equations}, 31(1):53--98, 1979.

\bibitem{Fitzpatrick2010}
P.~M. Fitzpatrick and H.~L. Royden.
\newblock {\em Real Analysis}.
\newblock Pearson, Upper Saddle River, NJ, 4 edition, Jan. 2010.

\bibitem{gabrielov1996complements}
A.~Gabrielov.
\newblock Complements of subanalytic sets and existential formulas for analytic
  functions.
\newblock {\em Inventiones mathematicae}, 125(1):1--12, 1996.

\bibitem{haraux1986asymptotics}
A.~Haraux.
\newblock Asymptotics for some nonlinear ode of the second order.
\newblock {\em Nonlinear Analysis: Theory, Methods \& Applications},
  10(12):1347--1355, 1986.

\bibitem{hoppensteadt1966singular}
F.~C. Hoppensteadt.
\newblock Singular perturbations on the infinite interval.
\newblock {\em Transactions of the American Mathematical Society},
  123(2):521--535, 1966.

\bibitem{josz2023global}
C.~Josz.
\newblock Global convergence of the gradient method for functions definable in
  o-minimal structures.
\newblock {\em Mathematical Programming}, pages 1--29, 2023.

\bibitem{josz2023convergence}
C.~Josz, L.~Lai, and X.~Li.
\newblock Convergence of the momentum method for semi-algebraic functions with
  locally lipschitz gradients.
\newblock {\em arXiv preprint arXiv:2307.03331}, 2023.

\bibitem{kokotovic1999singular}
P.~Kokotovi{\'c}, H.~K. Khalil, and J.~O'reilly.
\newblock {\em Singular perturbation methods in control: analysis and design}.
\newblock SIAM, 1999.

\bibitem{kuehn2015multiple}
C.~Kuehn et~al.
\newblock {\em Multiple time scale dynamics}, volume 191.
\newblock Springer, 2015.

\bibitem{kurdyka1998gradients}
K.~Kurdyka.
\newblock On gradients of functions definable in o-minimal structures.
\newblock In {\em Annales de l'institut Fourier}, volume~48, pages 769--783,
  1998.

\bibitem{kurdyka43quasi}
K.~Kurdyka and A.~Parusiski.
\newblock {Quasi-convex decomposition in o-minimal structures. Application to
  the gradient conjecture. Singularity theory and its applications, 137177}.
\newblock {\em Adv. Stud. Pure Math}, 43, 2006.

\bibitem{law1965ensembles}
S.~\L{}ojasiewicz.
\newblock Ensembles semi-analytiques.
\newblock {\em IHES notes}, 1965.

\bibitem{popa2002differential}
D.~Popa and N.~Lungu.
\newblock On some differential inequalities.
\newblock In {\em Seminar on Fixed Point Theory, Cluj-Napoca}, volume~3, pages
  323--326, 2002.
\newblock
  \url{http://www.math.ubbcluj.ro/~nodeacj/download.php?f=020POPA2.pdf}.

\bibitem{rudin1964principles}
W.~Rudin et~al.
\newblock {\em Principles of mathematical analysis}, volume~3.
\newblock McGraw-hill New York, 1964.

\bibitem{santambrogio2017euclidean}
F.~Santambrogio.
\newblock $\{$Euclidean, metric, and Wasserstein$\}$ gradient flows: an
  overview.
\newblock {\em Bulletin of Mathematical Sciences}, 7:87--154, 2017.

\bibitem{seidenberg1954new}
A.~Seidenberg.
\newblock A new decision method for elementary algebra.
\newblock {\em Annals of Mathematics}, pages 365--374, 1954.

\bibitem{tarski1951decision}
A.~Tarski.
\newblock {A decision method for elementary algebra and geometry: Prepared for
  publication with the assistance of JCC McKinsey}.
\newblock 1951.

\bibitem{vasil1995boundary}
A.~B. Vasil'Eva, V.~F. Butuzov, and L.~V. Kalachev.
\newblock {\em The boundary function method for singular perturbation
  problems}.
\newblock SIAM, 1995.

\bibitem{wilkie1996model}
A.~J. Wilkie.
\newblock Model completeness results for expansions of the ordered field of
  real numbers by restricted pfaffian functions and the exponential function.
\newblock {\em Journal of the American Mathematical Society}, 9(4):1051--1094,
  1996.

\bibitem{zavriev1993heavy}
S.~Zavriev and F.~Kostyuk.
\newblock Heavy-ball method in nonconvex optimization problems.
\newblock {\em Computational Mathematics and Modeling}, 4(4):336--341, 1993.

\end{thebibliography}

\end{document}